\newlength{\rulebreite}
\def\timesover#1#2#3{\ \xymatrix@1@=0pt@M=0pt{ _{#1}&\times&_{#2} \\& ^{#3}&}\ }
\def\otimesover#1#2#3{\ \xymatrix@1@=0pt@M=0pt{ _{#1}&\otimes&_{#2} \\& ^{#3}&}\ }
\theoremstyle{plain}
\newtheorem{thm}{Theorem}
\newtheorem{lem}[thm]{Lemma}
\newtheorem{cor}[thm]{Corollary}
\newtheorem{prop}[thm]{Proposition}
\theoremstyle{definition}
\newtheorem{rmk}[thm]{Remark}
\numberwithin{thm}{section}
\numberwithin{equation}{section}
\newcommand{\Spec}{{\rm Spec \,}}
\newcommand{\C}{{\mathbb C}}
\newcommand{\F}{{\mathbb F}}
\newcommand{\Q}{{\mathbb Q}}
\newcommand{\R}{{\mathbb R}}
\newcommand{\Z}{{\mathbb Z}}
\def\tilde{\widetilde}
\begin{document}

\title[Algebraic entropy]{Algebraic versus topological entropy for
surfaces over 
finite fields}
\author{H\'el\`ene Esnault}
\address{
Universit\"at Duisburg-Essen, Mathematik, 45117 Essen, Germany}
\email{esnault@uni-due.de}
\author{Vasudevan Srinivas}
\address{School of Mathematics, Tata Institute of Fundamental Research, Homi
Bhabha Road, Colaba, Mumbai-400005, India}
\email{srinivas@math.tifr.res.in}
\date{ May 14, 2011}
\thanks{The first author is supported by  the SFB/TR45 and
the ERC Advanced Grant 226257, the second author is supported by a J.C. Bose
Fellowship of the D.S.T.}
\begin{abstract}  We show that, as in de Rham  cohomology over the complex
numbers, the value of the
entropy of an automorphism of the surface over a finite field $\F_q $ is taken
on
the span of the N\'eron-Severi group inside of $\ell$-adic cohomology.

\end{abstract}
\maketitle
\section{Introduction}
If $X$ is a smooth projective surface over the field of complex numbers,
and $\varphi: X\to X$ is an automorphism,  then a notion of {\it entropy} of
$\varphi$ has been defined on the underlying topological manifold $X(\C)$
and shown to be the same as the following
cohomological definition (\cite{Gromov}, \cite{Yomdin}, see also \cite{Fried}
and \cite[Theorem~2.1]{DS}): let
$H^{2\bullet}(X(\C))=H^0(X(\C))\oplus
H^2(X(\C))\oplus H^4(X(\C))$ be the even degree de Rham cohomology. The
automorphism $\varphi$ acts linearly on $H^{2\bullet}(X(\C))$ and as the
identity on $H^0(X(\C)) \oplus H^4(X(\C))$. Thus the maximum absolute value of
the eigenvalues of  $\varphi$  is $\ge 1$. One defines the entropy $h(\varphi)$  to
be the maximum of the natural logarithm of those absolute values. It is then
$\ge 0$ and of interest are the cases when it is $>0$. Clearly, it can only
happen when $\varphi$ is not of finite order on $H^{2\bullet}(X(\C))$, so a
fortiori when $\varphi$ does not have finite order as an  automorphism of $X$.

Keiji Oguiso observed that Hodge theory implies that, {\it this maximum
is taken on the span of the N\'eron-Severi group inside of de Rham cohomology},
in fact on the transcendental part of de Rham cohomology, $\varphi$ has finite
order (see Proposition \ref{hodge} for a slightly more precise
statement). On the other hand, the definition of the entropy stated above is
clearly algebraic. One can replace de Rham cohomology by $\ell$-adic \'etale
cohomology in the definition. Taking then a ring $R\subset \C$ of finite type
over $\Z$ over which $(X, \varphi)$ has a model $(X_R, \varphi_R)$ such that
$X_R$ has good reduction at all closed points $s\in \Spec R$,   one sees that
the value of the entropy of $\varphi_s=\varphi_R\otimes_R \kappa(s) $ on
$H^{2\bullet}(X_{\bar s}, \Q_\ell)$ is taken on the $\Q_\ell$-span of the
N\'eron-Severi group inside of $\ell$-adic cohomology, where $X_s=X_R\times_R s,
X_{\bar s}=X_s\otimes_{\kappa(s)} \bar \kappa(s)$. 

We ask whether this property comes from the fact that over the finite field
$\kappa(s)$, $(X_s, \varphi_s)$ is the reduction mod $p$ of $(X, \varphi)$ or
whether it is true in general as a property of eigenvalues of  automorphisms 
acting on $\ell$-adic cohomology over finite fields. 
 
Our main result says:

\begin{thm}\label{thm1} Let $(X,\Theta)$ be a smooth, projective
polarized surface over a  finite field ${\F}_q$, $\varphi\in {\rm Aut}\,(X)$ an 
automorphism of the underlying surface, not necessarily preserving the polarization. Let 
 $\bar X=X\otimes_{\F_q}\bar{\F}_p$ be the corresponding 
surface over the algebraic closure $\bar {\F}_p$ of $\F_q$ (where 
$p$ is the characteristic). 

Let $\ell\neq p$ be a prime, and let 
\[V=V(X,[\Theta],\varphi)\subset  
[\Theta]^{\perp}\subset H^2_{{\rm \acute{e}t}}(\bar{X},\Q_{\ell}(1))\]
be the largest $\varphi$-stable subspace which is contained in the 
orthogonal complement of $[\Theta]\in 
H^2_{{\rm \acute{e}t}}(\bar{X},\Q_{\ell}(1))$ with 
respect to the cup product pairing 
\[H^2_{{\rm \acute{e}t}}(\bar{X},\Q_{\ell}(1))\otimes_{\Q_{\ell}}
H^2_{{\rm \acute{e}t}}(\bar{X},\Q_{\ell}(1))\to 
H^4_{{\rm \acute{e}t}}(\bar{X},\Q_{\ell}(2))\cong \Q_{\ell}.\]  

Then  $\varphi$ has finite order on  $V$.
\end{thm}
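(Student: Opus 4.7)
The plan is to first rewrite $V$ concretely. Because $V$ is $\varphi$-stable and contained in $[\Theta]^{\perp}$, every $v\in V$ satisfies $\langle \varphi^n v,[\Theta]\rangle=0$ for all $n$, and by $\varphi$-invariance of the cup product this means $v$ is orthogonal to the entire $\varphi$-orbit of $[\Theta]$. Let $W\subset NS(\bar X)_{\Q_\ell}$ denote the $\Q_\ell$-span of $\{\varphi^{n}[\Theta]\}_{n\in\Z}$. Then $V\subset W^{\perp}$, and conversely $W^{\perp}$ is itself $\varphi$-stable and contained in $[\Theta]^{\perp}$, so by the maximality defining $V$ one has $V=W^{\perp}$.

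I would then extract a good decomposition of $V$ from the Hodge Index Theorem applied to the N\'eron--Severi lattice (intersection form of signature $(1,\rho-1)$, with $[\Theta]^2>0$). The radical of the restriction of the form to $W$ is $\varphi$-stable and orthogonal to $[\Theta]$, hence lies in the negative-definite hyperplane $[\Theta]^{\perp}\cap NS_{\Q_\ell}$, so it must vanish. Therefore $W$ is non-degenerate and $H^2=W\oplus V$ is an orthogonal decomposition. Combining this with the orthogonal splitting $H^2=NS_{\Q_\ell}\oplus T$, where $T=NS_{\Q_\ell}^{\perp}$ is the transcendental summand, gives a further orthogonal decomposition $V=(V\cap NS_{\Q_\ell})\oplus T$.

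On the N\'eron--Severi summand the form is negative definite. Since $W$ is defined over $\Q$ (being spanned by classes of $\Q$-rational divisors), so is $V\cap NS_{\Q_\ell}$, and the characteristic polynomial of $\varphi$ on it has integer coefficients. Under any real embedding $\varphi$ becomes an orthogonal operator on a negative-definite real vector space, hence sits in a compact orthogonal group: all eigenvalues, together with all their Galois conjugates, have absolute value $1$. Kronecker's theorem then forces them to be roots of unity, and compactness yields semisimplicity, so $\varphi|_{V\cap NS_{\Q_\ell}}$ is of finite order.

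The real obstacle is the transcendental summand $T$. Here I would exploit that the geometric Frobenius $F$ commutes with $\varphi$ and fixes $W$ pointwise (because $[\Theta]$ is $\F_q$-rational), so $F$ preserves $V$ and in particular $T$; by Deligne's Weil~II, the eigenvalues of $F$ on $T$ have absolute value $1$ under every embedding $\bar\Q_\ell\hookrightarrow\C$. Decomposing $T\otimes_{\Q_\ell}\bar\Q_\ell$ by joint $(F,\varphi)$-generalized eigenspaces and using the $F$- and $\varphi$-invariant non-degenerate cup product, the aim is to produce an $\ell$-adic analogue of the Hodge--Riemann polarization on $T$ that is invariant under the commuting pair, thereby constraining the $\varphi$-eigenvalues on $T$ to the unit circle in every complex embedding. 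Integrality of the characteristic polynomial of $\varphi^*$ on $T$ (inherited from its integrality on $NS$ and on the full $H^2$) together with Kronecker then concludes that these eigenvalues are roots of unity, and semisimplicity finishes the argument. Replacing Oguiso's Hodge-theoretic compactness on the transcendental lattice over $\C$ by a Frobenius-based substitute in the $\ell$-adic setting is the main technical difficulty.
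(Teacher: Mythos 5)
Your setup is sound and matches the preliminary reductions in the paper: identifying $V$ as $W^{\perp}$ for $W=\Q_\ell\text{-span}\{\varphi^n[\Theta]\}$, using the Hodge index theorem on $NS(\bar X)_{\Q_\ell}$ to see that $W$ is non-degenerate and that $V$ splits orthogonally as $(V\cap NS_{\Q_\ell})\oplus V_{\rm tr}$, and disposing of the algebraic summand via negative-definiteness, semisimplicity, and Kronecker. This is exactly Corollary~\ref{alg} and Proposition~2.4 of the paper. But at the point where you arrive at $V_{\rm tr}$, the proposal stops being a proof and becomes a wish: you ask for an ``$\ell$-adic analogue of the Hodge--Riemann polarization on $T$'' and acknowledge it as ``the main technical difficulty,'' but you never produce it. That is the entire content of the theorem.

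To be concrete about why the step fails as sketched: knowing (by Deligne) that the Frobenius eigenvalues on $V_{\rm tr}$ lie on the unit circle says nothing about the eigenvalues of $\varphi$, even though $\varphi$ and $F$ commute --- a commuting pair can have one operator on the unit circle and the other not. The cup product on $V_{\rm tr}$ is non-degenerate but \emph{indefinite} (over $\C$ its signature is $(2,\dim V_{\rm tr}-2)$), so $\varphi$ lands in a non-compact orthogonal group and you get no bound on eigenvalues. Over $\C$, Oguiso's argument works precisely because the Hodge decomposition furnishes a further $\psi$-stable orthogonal splitting of $V_{\R,\rm tr}$ into definite pieces, allowing one to flip a sign and land in a compact group; there is no known analogue of that splitting for $\ell$-adic cohomology in positive characteristic. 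Indeed, as the paper notes in Section~\ref{ss:standard}, the positivity you want is essentially what the standard conjectures would supply, and they are not available. The actual proof circumvents this by the classification of surfaces: general type (finite automorphism group), Kodaira dimension $1$ and Enriques/bi-elliptic cases (Leray spectral sequence of the (quasi-)elliptic fibration), abelian surfaces (Tate's theorems plus CM theory and the Dirichlet unit theorem), and $K3$ surfaces (Shioda-supersingular case is all algebraic; otherwise lift to characteristic~$0$ via Lieblich--Maulik and invoke the Hodge-theoretic argument there). Your proposal skips all of this, so it does not constitute a proof.
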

 As this formulation does not involve directly the $\Q_\ell$-span of the
N\'eron-Severi, which is not always liftable  to characteristic 0 even if
$X$,
defined over the finite field, is  so liftable, one sees that one can
reverse the classical argument sketched above to get the following corollary: 
\begin{cor}\label{cor}
Let $(Y,\Theta)$ be a polarized surface over an algebraically closed 
field $k$, and let $\varphi:Y\to Y$ be an algebraic automorphism of $Y$
(with $\varphi$ not necessarily preserving the polarization). 

Let $\ell$ be a prime, invertible in $k$, and let 
\[V=V(Y,[\Theta],\varphi)\subset  
[\Theta]^{\perp}\subset H^2_{{\rm \acute{e}t}}(\bar{X},\Q_{\ell}(1))\]
be the largest $\varphi$-stable subspace which is contained in the 
orthogonal complement of $[\Theta]\in H^2_{\acute{e}t}(Y,\Q_{\ell}(1))$ with 
respect to the cup product pairing 
\[H^2_{{\rm \acute{e}t}}(Y,\Q_{\ell}(1))\otimes_{\Q_{\ell}}
H^2_{{\rm \acute{e}t}}(Y,\Q_{\ell}(1))\to 
H^4_{{\rm \acute{e}t}}(Y,\Q_{\ell}(2))\cong \Q_{\ell}.\]  
Then $\varphi$ has finite order on  $V$.

\end{cor}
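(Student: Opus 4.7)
The plan is to reduce Corollary \ref{cor} to Theorem \ref{thm1} by a standard spreading-out argument. Using the finite presentation techniques of EGA~IV, I would choose a finitely generated subring $R\subset k$ (an $\F_p$-algebra if $\Char k=p>0$, a $\Z$-algebra otherwise) together with a smooth projective morphism $f_R\colon Y_R\to \Spec R$, a relatively ample divisor $\Theta_R$, and an $R$-automorphism $\varphi_R$ of $Y_R$, whose base change along $R\inj k$ recovers $(Y,\Theta,\varphi)$. After inverting finitely many elements I may assume that $R$ is regular and that $\ell\in R^{\times}$; in particular, no closed point $s\in\Spec R$ has residue characteristic $\ell$, and all such residue fields $\kappa(s)$ are finite.

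Fix any closed point $s\in\Spec R$. Then $(Y_s,\Theta_s,\varphi_s)$ is a polarized smooth projective surface over the finite field $\kappa(s)$ equipped with an automorphism, so Theorem \ref{thm1} applies and tells me that the largest $\varphi_s$-stable subspace
\[V_s\subset [\Theta_s]^{\perp}\subset H^2_{{\rm \acute{e}t}}(\bar Y_{\bar s},\Q_{\ell}(1))\]
carries a finite order action of $\varphi_s$. To transport the conclusion back to $Y$, I invoke smooth and proper base change for $\ell$-adic cohomology applied to $f_R$: the sheaf $R^2f_{R*}\Q_{\ell}(1)$ is lisse on $\Spec R$. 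Choosing a geometric generic point $\bar\eta$ and a geometric point above $s$, I obtain a specialization isomorphism
\[H^2_{{\rm \acute{e}t}}(\bar Y_{\bar\eta},\Q_{\ell}(1))\;\xrightarrow{\sim}\;H^2_{{\rm \acute{e}t}}(\bar Y_{\bar s},\Q_{\ell}(1))\]
that is equivariant for the actions induced by $\varphi_R$ and that sends $[\Theta_{\bar\eta}]$ to $[\Theta_{\bar s}]$. Combined with the proper base change isomorphism coming from $R\inj k$, this yields a $\varphi$-equivariant $\Q_{\ell}$-linear isomorphism
\[H^2_{{\rm \acute{e}t}}(Y,\Q_{\ell}(1))\;\cong\;H^2_{{\rm \acute{e}t}}(\bar Y_{\bar s},\Q_{\ell}(1))\]
identifying $[\Theta]$ with $[\Theta_{\bar s}]$.

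Since $V$ is characterized intrinsically as the \emph{largest} $\varphi$-stable subspace of $[\Theta]^{\perp}$, the isomorphism above sends $V$ precisely onto $V_s$ and intertwines the actions of $\varphi$ and $\varphi_s$. Theorem \ref{thm1} then forces $\varphi$ to have finite order on $V$, as required. The only content beyond Theorem \ref{thm1} is the equivariance of the specialization map for $\varphi_R$ and its compatibility with the cycle class of the polarization; both are standard features of $\ell$-adic cohomology, so I foresee no genuine obstacle—Theorem \ref{thm1} does all the geometric work, and the reduction itself is essentially bookkeeping.
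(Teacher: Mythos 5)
Your proposal is correct and follows precisely the route the paper intends: the paper states that one ``can reverse the classical argument sketched above,'' meaning the same spreading-out over a finitely generated subring, specialization to a closed point with finite residue field, and invocation of the smooth-proper base change isomorphism (lisseness of $R^2f_{R*}\Q_{\ell}(1)$) together with compatibility of cycle classes under specialization. The paper does not spell out the reduction in the text (it also invokes the same device in the proof of Proposition~\ref{H^1-abelian}), but your bookkeeping — in particular the observation that the intrinsic characterization of $V$ as the largest $\varphi$-stable subspace of $[\Theta]^{\perp}$ guarantees that the $\varphi$-equivariant, $[\Theta]$-preserving specialization isomorphism carries $V$ onto $V_s$ — is exactly what is needed.
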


While the Hodge theoretic argument is purely abstract  (i.e., depends only
on the 
properties of Hodge structures, and not on geometric arguments), the arguments
we present 
in this note for proving Theorem \ref{thm1} rely  on the classification of smooth projective 
surfaces, on the fact that surfaces of general type  (over a finite field)
have a finite
group of automorphisms, on the Tate conjecture  for abelian surfaces, and,
unfortunately, 
on one argument involving lifting $K3$ surfaces to characteristic $0$. So, due to this one $K3$ 
case,  we can't say that we have a purely arithmetic proof of  Corollary \ref{cor} over $\C$. On 
the other hand, Theorem \ref{thm1} should follow from the standard conjectures
(see section \ref{ss:standard}).   
So, aside from its interest for entropy questions, it can
also be viewed as a motivic statement. To reinforce this viewpoint, we show in
section Theorem~\ref{even}
 \begin{thm} \label{thm:total}
 In the situation the Theorem~\ref{thm1}, the maximum of the absolute values of
the eigenvalues
of $\varphi$ on $\oplus_{i=0}^4 H^i_{{\rm \acute{e}t}}(\bar X, \Q_\ell)$ 
(with
respect to any complex embedding of $\Q_{\ell}$ is achieved on   the
$\Q_\ell$-span
of $\langle \varphi^n[\Theta], \ n\in \Z \rangle$, in 
$H^2_{{\rm \acute{e}t} } (\bar X, \Q_\ell)$. 

\end{thm}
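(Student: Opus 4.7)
The plan is to combine Theorem~\ref{thm1} with the Enriques--Kodaira classification of surfaces, in two stages: first showing that the maximum absolute eigenvalue on $H^2$ equals that on the span $W=\sum_{n\in\Z}\Q_\ell\cdot\varphi^n[\Theta]$, then bounding the contributions of $H^0$, $H^1$, $H^3$, $H^4$.

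For the $H^2$ analysis, I would identify the space $V$ of Theorem~\ref{thm1} with the orthogonal complement $W^\perp$: since $\varphi$ is an automorphism, it acts trivially on $H^4_\et(\bar X,\Q_\ell(2))\cong \Q_\ell$, so it preserves the cup product pairing, and $x\in V$ iff $x\cdot\varphi^n[\Theta]=0$ for all $n$, iff $x\in W^\perp$. Theorem~\ref{thm1} then yields finite order on $V$. The restriction of the cup product to $W$ is a symmetric $\varphi$-invariant form with radical $W\cap V$, non-degenerate on $W/(W\cap V)$; as $\varphi$ then acts as an isometry of a non-degenerate symmetric form, its eigenvalues on $W/(W\cap V)$ come in reciprocal pairs $\{\lambda,\lambda^{-1}\}$, so the largest absolute value on $W$ equals the reciprocal of the smallest. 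The pairing identifies $H^2/V\cong W^\vee$ as $\varphi$-modules, whose eigenvalues are the reciprocals of those on $W$; hence the largest absolute value on $H^2/V$ equals the largest on $W$. Combined with finite order on $V$ in the exact sequence $0\to V\to H^2\to H^2/V\to 0$, one concludes that the largest absolute eigenvalue on $H^2$ equals that on $W$, which is $\geq 1$ since $[\Theta]\in W$.

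For the other degrees, $\varphi$ acts as the identity on $H^0$ and $H^4$, contributing eigenvalue $1$, and by Poincar\'e duality $H^3\cong (H^1)^\vee$, so the largest absolute value on $H^3$ equals the reciprocal of the smallest on $H^1$. I would then apply the Enriques--Kodaira classification to $\bar X$: when $\bar X$ is of general type, $\mathrm{Aut}(X)$ is finite; when $\bar X$ is rational, ruled, properly elliptic, bielliptic, Enriques, or K3, $H^1$ either vanishes or factors through the Albanese of a curve, on which every automorphism acts with finite order. In all these cases the largest absolute value of $\varphi$ on $H^1$ and $H^3$ is $1$, hence at most the maximum on $W$.

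The remaining case is when $\bar X$ is an abelian surface, where $H^2=\Lambda^2 H^1$. Ordering the eigenvalues $\alpha_1,\dots,\alpha_4$ of $\varphi$ on $H^1$ so that $|\alpha_1|\geq\dots\geq|\alpha_4|$, their product equals $\pm 1$ (since $\det(\varphi|_{T_\ell\bar X})=\pm 1$ for an automorphism), and the largest absolute value on $H^2$ is $|\alpha_1\alpha_2|$. One needs $|\alpha_2|\geq 1\geq|\alpha_3|$, which yields $|\alpha_1\alpha_2|\geq|\alpha_1|$ (the maximum on $H^1$) and $|\alpha_1\alpha_2|\geq 1/|\alpha_4|$ (the maximum on $H^3$). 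This is the main obstacle, and I expect it to require case-by-case analysis via the Albert classification of $\mathrm{End}(\bar X)\otimes\Q$ (real multiplication by a real quadratic field, quaternionic multiplication, quartic CM, or matrix algebras over orders in $\Z$ or in an imaginary quadratic field); in each case the action on $T_\ell\bar X$ factors through a module over the endomorphism algebra whose structure forces paired absolute values.
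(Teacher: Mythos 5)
Your proposal follows the same overall strategy as the paper: reduce the claim to showing the spectral radius on $H^1$ and $H^3$ is bounded by that on $H^2$, use Poincar\'e duality to relate $H^1$ and $H^3$, run through the classification of surfaces via the Albanese, and treat the abelian surface case separately. Your $H^2$ analysis (identifying $V=W^\perp$, noting the radical $W\cap V$ carries finite order by Theorem~\ref{thm1}, and using reciprocal pairs of eigenvalues on $W/(W\cap V)$ to conclude the maximum on $H^2/V\cong W^\vee$ equals that on $W$) is correct, and in fact spells out a step the paper leaves implicit when it says ``By Theorem~\ref{thm1}, we just have to see\ldots''. Your classification reduction agrees with the paper's (with the same minor imprecision: a properly elliptic surface such as $C\times E$ with $g(C)\geq 2$ has $2$-dimensional Albanese image and $H^1$ not factoring through a single curve; one fixes this via the Leray spectral sequence for the elliptic fibration, as in the proof of Proposition~\ref{kod=1}, noting the quotient $H^0(\bar C,R^1f_*\Q_\ell)$ of $H^1$ is killed after replacing $\varphi$ by the power acting as a translation).

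The genuine gap is the abelian surface case. Your reduction is correct: writing the eigenvalues of $\varphi$ on $H^1$ as $\alpha_1,\dots,\alpha_4$ with $|\alpha_1|\geq\cdots\geq|\alpha_4|$ and $|\alpha_1\alpha_2\alpha_3\alpha_4|=1$, it suffices to show $|\alpha_2|\geq 1\geq|\alpha_3|$. But you then only say you ``expect'' this to follow from a case-by-case analysis via Albert's classification. This is a plan, not a proof, and the plan as stated misses the decisive arithmetic input. The paper proves the needed statement (in the sharper form Proposition~\ref{H^1-abelian}, for abelian varieties of any dimension, by induction on dimension) by arguing by contradiction: if the largest absolute value is attained by a unique real eigenvalue $\lambda>1$ (a Pisot number), then $\Q(\varphi)=\Q(\lambda)$ is a maximal commutative subfield of ${\rm End}(\bar X)\otimes\Q$ \emph{with a real place}. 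By Tate's theorem the geometric Frobenius $F$ is central, hence lies in $\Q(\varphi)$, hence has a real integral eigenvalue; its minimal polynomial is then linear, forcing $X\cong E^n$ for a supersingular elliptic curve. Then $\Q(\varphi)$ must split the quaternion division algebra $D$ at every place including $\infty$, contradicting that $D$ is non-split at $\infty$. The ingredients that make this work --- Frobenius is central (Tate--Honda), the center $\Q(F)$ is CM or $\Q$, and $D$ is ramified at $\infty$ --- are specific to finite fields and go well beyond what the phrase ``Albert classification'' supplies; indeed, Albert types I and II simply do not occur for simple abelian varieties over $\bar\F_p$, which is precisely what rules out the real place you would need to exclude. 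Without an argument at this level, the abelian case is left open.
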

We deduce of course the same theorem over any field:
\begin{cor}
 In the situation of Corollary~\ref{cor}, the maximum of the absolute values of
the eigenvalues
of $\varphi$ on $\oplus_{i=0}^4 H^i_{{\rm \acute{e}t}}(\bar Y, \Q_\ell)$ 
(with
respect to any complex embedding of $\Q_{\ell}$) is achieved on   the
$\Q_\ell$-span
of $\langle \varphi^n[\Theta], \ n\in \Z \rangle$, in 
$H^2_{{\rm \acute{e}t} } (\bar Y, \Q_\ell)$. 

\end{cor}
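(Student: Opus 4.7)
The plan is to deduce this corollary from Theorem~\ref{thm:total} by exactly the spreading-out mechanism already used to derive Corollary~\ref{cor} from Theorem~\ref{thm1}. Concretely, the eigenvalue data of $\varphi$ on $\bigoplus_{i=0}^4 H^i_{{\rm \acute{e}t}}(\bar Y, \Q_\ell)$, together with the position of the span $\langle \varphi^n[\Theta] \mid n \in \Z \rangle$ inside $H^2$, will be shown to be preserved under a suitable specialization to a triple defined over a finite field, where Theorem~\ref{thm:total} applies directly.

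If $\Char(k) = p > 0$, the triple $(Y, \varphi, \Theta)$ descends to $(Y_0, \varphi_0, \Theta_0)$ over some finite subfield $\F_q \subset k$; Theorem~\ref{thm:total} applies to this model, and $H^\bullet_{{\rm \acute{e}t}}(\bar Y, \Q_\ell)$ together with the $\varphi$-action is invariant under enlargement of the algebraically closed base field (since $\ell \neq p$). If $\Char(k) = 0$, I spread $(Y, \varphi, \Theta)$ to a smooth projective model $\pi : Y_R \to \Spec R$ over a finitely generated $\Z$-subalgebra $R \subset k$ with $\ell \in R^\times$, shrinking $\Spec R$ if necessary so that $\Theta_R$ is relatively ample and $\varphi_R$ is a well-defined $R$-automorphism. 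For any closed point $s \in \Spec R$ with finite residue field $\kappa(s)$, the fiber $(Y_s, \varphi_s, \Theta_s)$ satisfies the hypotheses of Theorem~\ref{thm:total}; since $R^i\pi_*\Q_\ell$ is lisse on $\Spec R[1/\ell]$ by smooth and proper base change, one obtains a $\varphi$-equivariant isomorphism between $H^\bullet_{{\rm \acute{e}t}}(\bar Y, \Q_\ell)$ and $H^\bullet_{{\rm \acute{e}t}}(\overline{Y_s}, \Q_\ell)$ under which $[\Theta]$ corresponds to $[\Theta_s]$, and hence the $\Q_\ell$-span of the $\varphi$-orbit of $[\Theta]$ corresponds to that of $[\Theta_s]$. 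The maximum of absolute values of eigenvalues and the property of being attained on this span therefore transfer from $Y_s$ to $Y$.

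The only delicate point is the $\varphi$-equivariance of the base change isomorphism, together with the compatibility of the specialization $[\Theta] \mapsto [\Theta_s]$ with the $\varphi$-action; these are standard consequences of the fact that $\varphi_R$ is defined globally over $R$ and commutes with $\pi$. No new obstacle arises beyond those already handled in the passage from Theorem~\ref{thm1} to Corollary~\ref{cor}: once Theorem~\ref{thm:total} is in hand, the passage from finite fields to arbitrary algebraically closed fields is mechanical.
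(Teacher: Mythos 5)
Your approach is the same as the paper's (implicit) one: the paper gives no separate proof of this corollary, remarking only that it follows from Theorem~\ref{thm:total} by the same spreading-out-and-specialization mechanism used to pass from Theorem~\ref{thm1} to Corollary~\ref{cor}, and that is exactly what you do. The characteristic-zero case is handled correctly: spread $(Y,\varphi,\Theta)$ out to a smooth projective $\pi\colon Y_R\to\Spec R$ over a finitely generated $\Z$-subalgebra $R\subset k$ with $\ell$ invertible, invoke smooth proper base change to get a $\varphi$-equivariant isomorphism of $\ell$-adic cohomologies carrying $[\Theta]$ to $[\Theta_s]$ for a closed point $s$, and transfer the eigenvalue statement from the finite-field fiber.

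There is a slip in your treatment of $\Char(k)=p>0$: you assert that $(Y,\varphi,\Theta)$ ``descends to $(Y_0,\varphi_0,\Theta_0)$ over some finite subfield $\F_q\subset k$,'' but this is false for a general algebraically closed $k$ of characteristic $p$ (take, e.g., $k=\overline{\F_p(t)}$ and an elliptic surface whose $j$-invariant genuinely depends on $t$). The fix is to run exactly the same argument as in characteristic zero: spread out over a finitely generated $\F_p$-subalgebra $R\subset k$, shrink $\Spec R$ so that the family is smooth projective with an $R$-automorphism and a relative polarization, and specialize to any closed point, whose residue field is then automatically finite; smooth proper base change again gives the $\varphi$-equivariant comparison. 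With that correction the proposal is complete and agrees with the paper.
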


\noindent{\it Acknowledgements:} This work has been greatly influenced by the 
observation of Keiji Oguiso, and his willingness to explain to us a bit of entropy theory.
We benefited from discussions with Curt McMullen on entropy,  with Pierre
Deligne on the relation between Theorem \ref{thm1} and the standard
conjectures (see section~\ref{ss:standard}),  with M. S. Raghunathan on
algebraic groups. The first author thanks the 
Tata Institute for Mathematics 
and the second author  the Essen Seminar for Algebraic Geometry and
Arithmetics for hospitality.

\section{Some preliminaries and general reduction steps to prove Theorem
\ref{thm1}}
  As was already done in the formulation of the Theorem and its  Corollary, we
write $\varphi$   for the contravariant action of $\varphi$ on
cohomology; 
it should be clear from the context if $\varphi$ denotes  the automorphism, or the linear 
automorphism obtained from it in a specific 
linear representation. 

As in Theorem \ref{thm1}, one considers the action of $\varphi$ on $H^i(\bar X,
\Q_\ell)$, we may as well replace $\F_q$ by a finite extension, and thus we
will always assume that the N\'eron-Severi group $NS(\bar{X})$ 
is defined over $\F_q$.

We may also replace $\varphi$ by any power $\varphi^n$, $n\neq 
0$, without loss of generality. In particular, as already observed, the result
has content only when $\varphi$ acts on $H^2_{{\rm \acute{e}t}}(\bar{X},{\mathbb 
Q}_{\ell}(1))$ through a linear automorphism of infinite 
(multiplicative) order.

\begin{lem}\label{reduction} Suppose $[\Theta]\in NS(\bar{X})$ 
has a finite orbit under $\varphi$. Then $\varphi$ itself has finite 
order as an automorphism, so a fortiori it has finite order on the whole
cohomology  $H^*_{{\rm \acute{e}t}}(\bar{X},\Q_{\ell}(1))$, and Theorem
\ref{thm1} is trivially satisfied. 
\end{lem}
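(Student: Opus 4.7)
The plan is to reduce, by successively replacing $\varphi$ with a suitable power, to the case where $\varphi$ preserves a fixed ample line bundle on $X$, and then to invoke the fact that over $\F_q$ the automorphism group of a polarized variety has only finitely many $\F_q$-points (since it is a group scheme of finite type). Replacing $\varphi$ by a power is allowed by the general reduction already noted in the text.

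First, by hypothesis the orbit of $[\Theta]\in NS(\bar X)$ under $\varphi^\ast$ has some finite size $n$; replacing $\varphi$ by $\varphi^n$, we may assume $\varphi^\ast[\Theta]=[\Theta]$ in $NS(\bar X)$, which under the running assumption equals $NS(X)$. Then $\varphi^\ast \Theta \otimes \Theta^{-1}$ lies in $\mathrm{Pic}^0(X)(\F_q)$. Since $\mathrm{Pic}^0(X)$ is a group scheme of finite type over $\F_q$, its set of $\F_q$-points is finite, so a further passage to a power ensures $\varphi^\ast \Theta \cong \Theta$ as actual line bundles on $X$.

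At this point $\varphi$ belongs to $G(\F_q)$, where $G = \mathrm{Aut}(X,\Theta)$ is the group scheme of automorphisms of the polarized variety $(X,\Theta)$. By Matsusaka--Mumford, $G$ is represented by a group scheme of finite type over $\F_q$: for $m\gg 0$ the system $|m\Theta|$ embeds $X\hookrightarrow\mathbb{P}^N$ and identifies $G$ with a closed subgroup of $\mathrm{PGL}_{N+1}$. Any group scheme of finite type over $\F_q$ has only finitely many $\F_q$-points, so $G(\F_q)$ is finite, and hence this power of $\varphi$, and therefore $\varphi$ itself, has finite order as an automorphism. The only nontrivial ingredient is the representability of $\mathrm{Aut}(X,\Theta)$ by a finite-type group scheme; once that is granted, the remainder of the argument is essentially formal, as everything collapses thanks to the finiteness of $\F_q$-points of finite-type group schemes over $\F_q$. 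I do not expect any serious obstacle.
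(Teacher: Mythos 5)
Your proposal reaches the same intermediate reduction as the paper (after passing to a power, $\varphi$ fixes an ample line bundle up to isomorphism, using finiteness of $\F_q$-points of $\mathrm{Pic}^\tau$ or $\mathrm{Pic}^0$), but the final step is genuinely different. The paper's route is to observe that $\varphi$ induces a graded automorphism of the finitely generated graded $\F_q$-algebra $A=\oplus_{n\geq 0}H^0(X,\mathcal L^{\otimes n})$, and then apply the elementary Lemma~\ref{easy}: any graded automorphism of such an algebra is determined by its restriction to a finite-dimensional generating subspace $W$, hence lives in the finite group $GL(W)(\F_q)$. You instead invoke Matsusaka--Mumford to know that $\mathrm{Aut}(X,\Theta)$ is representable by a group scheme of finite type over $\F_q$, and then cite finiteness of $\F_q$-points of any finite-type group scheme. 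Both are correct, and both ultimately rest on the same finiteness-of-$\F_q$-points philosophy, but the paper's version keeps everything entirely elementary: it needs only the finite generation of the section ring, not the deeper representability theorem for the automorphism functor of a polarized variety. (Indeed, the graded-ring argument is essentially a self-contained proof of the finiteness one would otherwise extract from representability.) One small imprecision in your write-up: you jump from $\varphi^\ast[\Theta]=[\Theta]$ in $NS(\bar X)$ to $\varphi^\ast\Theta\otimes\Theta^{-1}\in\mathrm{Pic}^0(X)(\F_q)$; depending on whether $NS$ denotes $\mathrm{Pic}/\mathrm{Pic}^0$ or $\mathrm{Pic}/\mathrm{Pic}^\tau$, this class may a priori only lie in $\mathrm{Pic}^\tau$, which is why the paper works with $\mathrm{Pic}^\tau$. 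Since both have finitely many $\F_q$-points this does not affect the conclusion, but it is worth being careful about.
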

\begin{proof} Replacing $\varphi$ by a power, we may assume 
that 
\begin{enumerate} 
\item[(i)] the algebraic equivalence class $[\Theta]\in 
NS(\bar{X})$ is fixed by $\varphi$
\item[(ii)] there is a very ample line bundle ${\mathcal L}$ on 
$X$, which satisfies $\varphi^*{\mathcal L}\cong {\mathcal L}$, whose 
class in $NS(\bar{X})$ is $m[\Theta]$ for some positive integer $m$. 
\end{enumerate}
Here, (i) is clear. For (ii), first choose a very ample $\mathcal L$ on 
$X$ with class $m[\Theta]$ for some positive integer $m$. Now the orbit 
of $[{\mathcal L}]$ in ${\rm Pic}\,(\bar{X})$ under the group of 
automorphisms generated by $\varphi$ is contained in a fixed coset of 
${\rm Pic}^{\tau}(X)(\bar{\F}_p)$, consisting of $\F_q$-rational points 
in ${\rm Pic}(\bar{X})$, and this is a finite set. Now replacing 
$\varphi$ by a suitable positive power, we may assume that the class of 
$\mathcal L$ in ${\rm Pic}(\bar{X})$ is fixed. Then (ii) holds, 
since $\varphi^*{\mathcal L}$ and $\mathcal L$ are line 
bundles on $X$, which become isomorphic on $\bar{X}$, so that they 
are isomorphic on $X$.

In particular, from (ii), the automorphism $\varphi$ of $X$ yields a 
graded automorphism of the ring $A=\oplus_{n\geq 0}H^0(X,{\mathcal 
L}^{\otimes n})$. Conversely, $\varphi$ is the induced automorphism on 
$X={\rm Proj}\, A$, obtained from the graded ring automorphism of $A$. 
Now Lemma~\ref{easy} below finishes the argument.
\end{proof}

\begin{lem}\label{easy} Let $k$ be a finite field, $A=\oplus_{n\geq 
0}A_n$ a finitely generated graded algebra over $A_0=k$. Then any graded 
automorphism of $A$ has finite order.
\end{lem}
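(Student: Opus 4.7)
The plan is to show something stronger, namely that the full group $G$ of graded automorphisms of $A$ is itself finite; this immediately implies that every element has finite order. The two ingredients I would use are (a) $A$ is finitely generated over $A_0 = k$, and (b) $k$ is a finite field.

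First I would choose, since $A$ is finitely generated over $k=A_0$, a finite set of homogeneous generators, all of degree $\leq N$ for some integer $N$. Equivalently, $A$ is generated as a $k$-algebra by the finite-dimensional subspace $B = A_1 \oplus \cdots \oplus A_N$. Moreover, each $A_i$ is itself a finite-dimensional $k$-vector space (each degree piece of a finitely generated graded algebra over a field is finite-dimensional), so, because $k$ is finite, each $A_i$ is actually a finite set.

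Next, any graded automorphism $\varphi \in G$ restricts to a $k$-linear automorphism of each $A_i$ (it is $k$-linear because it fixes $A_0 = k$, and it preserves each graded piece by definition). Hence restriction to $B$ defines a group homomorphism
\[
\rho : G \longrightarrow \prod_{i=1}^{N} GL_k(A_i).
\]
The target is a finite group, since each $A_i$ is a finite set. Moreover, $\rho$ is injective: a $k$-algebra automorphism of $A$ is determined by its values on a generating set, and $B$ generates $A$ as a $k$-algebra. Therefore $G$ embeds into a finite group and is itself finite, so every element has finite multiplicative order.

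I do not anticipate any real obstacle in this argument; the only thing to be mildly careful about is that one uses both finiteness of $k$ and finite generation of $A$ over $k$ (to ensure the $A_i$ are finite sets and that one needs only finitely many generators to control $\varphi$).
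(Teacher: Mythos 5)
Your proof is correct and is essentially the same as the one given in the paper: both arguments embed the group of graded automorphisms into a finite general linear group by restricting to a finite-dimensional space of low-degree generators (the paper uses $GL(W)$ with $W=\oplus_{i=0}^n A_i$, you use the subgroup $\prod_i GL_k(A_i)$, which is the same idea).
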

\begin{proof} Since $A$ is finitely generated over $k$, it is generated 
by $W=\oplus_{i=0}^nA_i$ for some $n$, where we note that $W$ is a finite vector
space. 
Any graded automorphism of $A$ restricts to a $k$-linear automorphism 
of $W$, and this restriction uniquely determines the graded automorphism. 
Thus we may identify the group of graded automorphisms with a subgroup of 
the finite group $GL(W)$.
\end{proof}

\begin{prop}\label{eigen} Let the notation be as in 
Theorem~\ref{thm1}. Let $i\geq 0$, $j\in \mathbb Z$.
\begin{itemize}
\item[(i)] The eigenvalues of $\varphi$ acting 
on any \'{e}tale cohomology group 
$H^i_{{\rm \acute{e}t}}(\bar{X},\Q_{\ell}(j))$ 
are algebraic integers, which are units (that is, invertible elements 
in the ring of algebraic integers). 
\item[(ii)] The characteristic polynomial of $\varphi$ on 
$H^i_{{\rm \acute{e}t}}(\bar{X},\Q_{\ell}(j))$ has integer coefficients, 
and is a monic polynomial with constant term $\pm 1$. This polyomial is 
independent of $j$, and of the chosen prime $\ell\neq p$. 
\item[(iii)] A similar conclusion holds for the characteristic 
polynomial of $\varphi$ acting on any $\varphi$-stable subspace of 
$H^i_{{\rm \acute{e}t}}(\bar{X},\Q_{\ell}(j))$, which can be defined 
using a projector in the ring of self-correspondences of $X$ (i.e., 
corresponds to a direct summand of the Chow motive of $X$ which, on base 
change to $\bar{\F}_p$, has only one non-zero \'{e}tale cohomology 
group).  In particular, this holds for the 
characteristic polynomial of $\varphi$ on $V$. 
\end{itemize}
\end{prop}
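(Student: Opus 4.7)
The plan is to combine the action of $\varphi$ on integral $\ell$-adic cohomology with Deligne's Weil~II theorem and the Grothendieck--Lefschetz trace formula, crucially exploiting that $\varphi$, being $\F_q$-rational, commutes with the geometric Frobenius $F$ on $\bar X$.

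As a first step, since $\varphi$ is an automorphism of $X$, it acts invertibly on the free $\Z_\ell$-module $H^i_{\rm \acute{e}t}(\bar X, \Z_\ell)/\mathrm{(torsion)}$, so its characteristic polynomial is monic in $\Z_\ell[T]$ with constant term $\pm \det(\varphi) \in \Z_\ell^{\times}$. Invariance under Tate twist in~(ii) is immediate: twisting by $\Q_\ell(j)$ modifies only the Galois action, not that of a geometric automorphism.

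The substantive step is to upgrade to $\ell$-independent $\Z[1/p]$-integrality. For every $a \in \Z$ and $b \geq 1$, I would apply Grothendieck--Lefschetz to the graph $\Gamma_{\varphi^a F^b}$:
\[
\sum_i (-1)^i \Tr\!\bigl(\varphi^a F^b \,\big|\, H^i_{\rm \acute{e}t}(\bar X, \Q_\ell)\bigr) \;=\; \Gamma_{\varphi^a F^b} \cdot \Delta \;\in\; \Z,
\]
an integer independent of $\ell$. By Deligne's purity, the $F$-eigenvalues on $H^i$ are algebraic integers of complex absolute value $q^{i/2}$, and since these weights are pairwise distinct for $i \in \{0,\dots,4\}$, separation by weight---which is Galois-equivariant---extracts each summand $\Tr(\varphi^a F^b | H^i)$ as a rational, $\ell$-independent function of $b$. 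Since $\varphi$ and $F$ commute, this function is an exponential polynomial in $b$ whose values at finitely many $b \geq 1$ determine it via a Vandermonde system with algebraic-integer nodes; the value at $b=0$ recovers $\Tr(\varphi^a | H^i)$, Galois-invariant hence rational. Newton's identities convert these rational power-sum traces into rational coefficients for $\mathrm{charpoly}(\varphi|H^i)$; combined with the $\Z_\ell$-integrality for each $\ell \neq p$, this yields $\Z[1/p]$-integral, $\ell$-independent coefficients.

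Part~(i) then follows by applying the same argument to $\varphi^{-1}$, showing eigenvalues of $\varphi$ are $\ell$-adic units at all $\ell \neq p$. Part~(iii) is analogous: apply Lefschetz to $\pi \circ \varphi^a \circ F^b$ where $\pi$ is the algebraic projector cutting out the $\varphi$-stable summand; the hypothesis that this summand is concentrated in a single cohomological degree is precisely what makes the weight-separation step available. The main obstacle I foresee is completing the integrality statement at $p$---passing from $\Z[1/p]$- to $\Z$-coefficients, equivalently showing the eigenvalues are also $p$-integral; this will likely require input from $p$-adic (crystalline) cohomology, e.g.\ Newton-polygon bounds for the action of $\varphi$ on the isocrystal of $X$.
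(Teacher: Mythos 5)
Your proposal takes essentially the same route the paper has in mind when it invokes ``a standard consequence of the Grothendieck--Lefschetz fixed point formula \ldots combined with Deligne's theorem,'' and in fact you have unpacked the content of the Katz--Messing reference rather than citing it: the trace formula applied to $\varphi^a F^b$, Deligne's purity to separate the alternating sum by weight, and the observation that the commuting actions of $\varphi$ and $F$ let you interpolate back to $b=0$ are exactly how one shows that the individual K\"unneth components of $X$ are algebraic projectors and that the resulting characteristic polynomial of $\varphi$ on $H^i$ is a rational polynomial independent of the Weil cohomology. Your handling of the Tate twist and the $\Z_\ell$-integrality from the lattice $H^i(\bar X,\Z_\ell)/(\text{tors})$ is also correct.

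The one place you stop short of a full proof is the $p$-integrality, and your instinct there is right: the $\ell$-adic trace formula and weight separation alone do give $\Z[1/p]$-coefficients but cannot control the $p$-adic valuations of the eigenvalues of $\varphi$ (indeed, simultaneous triangularization with $F$ only yields the constraint $v_p(\alpha_j)+b\,v_p(\beta_j)\ge 0$ for $b\ge 1$, which is vacuous at places where $F$ is not a unit). This is genuinely closed exactly as you predict: $\varphi$ also acts on the free $W(\F_q)$-module $H^i_{\mathrm{crys}}(X/W)$, so its characteristic polynomial there has $W$-coefficients; the cohomology-theory-independence of the characteristic polynomial (which is the content of the Katz--Messing machinery your argument reproves on the $\ell$-adic side, and which they establish for crystalline cohomology as well) then identifies this with the $\Q$-polynomial you produced, forcing all coefficients into $\Z=\Q\cap W\cap\bigcap_{\ell\ne p}\Z_\ell$. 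So there is no wrong step in your argument --- you have the $\ell$-adic half of the Katz--Messing package and have correctly flagged that the missing half is the $p$-adic comparison, which the paper absorbs into its one-line citation.
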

\begin{proof}
This is a standard consequence of the  Grothendieck-Lefschetz 
fixed point formula, applied to powers of $\varphi$, combined with 
Deligne's theorem on the eigenvalues of the  geometric Frobenius 
(``Weil's Riemann Hypothesis'').  In (ii) and (iii), the point is 
that the characteristic polynomial in question has rational 
coefficients, since it is determined by the action on cohomology of a 
Chow Motive, while its roots are algebraic integers (this argument 
appears in a paper of Katz and Messing \cite{Katz-Messing}). Over finite 
fields,  Deligne's theorem implies that the individual \'{e}tale 
cohomology groups do correspond to Chow motives (see 
\cite[ Theorem~2.1]{Katz-Messing},   and the proof in 
\cite[ Theorem~2.1]{Katz-Messing}  applies equally well to any 
Chow motive which is a summand of the motive of $X$, whose \'{e}tale 
cohomology is concentrated in one degree.
\end{proof}

Next, we consider the subspace of $V$ spanned by algebraic cycles.
\begin{prop} Assume we are in the situation of Theorem~\ref{thm1}.
Let 
\[V_{{\rm alg}}=\left(NS(\bar{X})\otimes\Q_{\ell}\right)\cap V.\]
 Then $V_{{\rm alg}}$ is stable under $\varphi$, and 
$\varphi$ on $V_{{\rm alg}}$ has finite order.
\end{prop}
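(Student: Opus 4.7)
The plan is to reduce to a lattice-theoretic finiteness statement and apply the Hodge index theorem.

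Stability of $V_{{\rm alg}}$ under $\varphi$ is immediate: both $V$ (by definition) and $NS(\bar X)\otimes\Q_\ell$ (on which $\varphi$ acts via its action on $NS(\bar X)$) are $\varphi$-stable, hence so is their intersection. For the finite-order assertion I would first identify $V_{{\rm alg}}$ as coming from a $\Q$-subspace of $NS(\bar X)_{\Q}$. Set
\[T_0 := \sum_{k\in\Z}\Q\cdot\varphi^k[\Theta]\;\subset\; NS(\bar X)_{\Q},\]
a finite-dimensional $\varphi$-stable $\Q$-subspace. Because $\varphi$ preserves the cup product pairing (it acts trivially on $H^4_{{\rm \acute{e}t}}(\bar X,\Q_\ell(2))\cong\Q_\ell$), any $\varphi$-stable subspace of $[\Theta]^\perp$ is automatically orthogonal to every $\varphi^k[\Theta]$, while conversely $(T_0\otimes\Q_\ell)^\perp$ is itself $\varphi$-stable and contained in $[\Theta]^\perp$; hence $V=(T_0\otimes\Q_\ell)^\perp$. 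Intersecting with $NS(\bar X)\otimes\Q_\ell$ and using that the pairing is $\Q$-rational on $NS(\bar X)_{\Q}$, one obtains
\[V_{{\rm alg}} \;=\; N_0\otimes_{\Q}\Q_\ell, \qquad N_0 := T_0^{\perp}\cap NS(\bar X)_{\Q},\]
a $\varphi$-stable $\Q$-subspace of $[\Theta]^\perp\cap NS(\bar X)_{\Q}$. So it suffices to prove that $\varphi$ has finite order on the finite-dimensional $\Q$-vector space $N_0$.

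Next I would invoke the Hodge index theorem for smooth projective surfaces: the intersection form on $NS(\bar X)_{\Q}$ has signature $(1,\rho-1)$, and since $[\Theta]$ is ample, the restriction to its orthogonal complement is negative definite; in particular the form is negative definite on $N_0$. Because $\varphi$ preserves the intersection form, $\varphi|_{N_0}$ is an isometry of this negative definite $\Q$-quadratic space. On the other hand, $L_0 := N_0\cap(NS(\bar X)/{\rm tors})$ is a full $\varphi$-stable lattice in $N_0$ (clearing denominators, every element of $N_0$ lies in $L_0\otimes\Q$). Thus $\varphi|_{L_0}$ sits inside the isometry group of a negative definite integral lattice, a discrete subgroup of the compact group $O({\rm rk}\,L_0,\R)$, and is therefore an element of a finite group. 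So $\varphi$ has finite order on $L_0$, thus on $N_0=L_0\otimes\Q$, and finally on $V_{{\rm alg}}=N_0\otimes\Q_\ell$.

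I do not anticipate a real obstacle here: the whole argument is mechanical once Hodge index is identified as the operative input, and none of the deeper ingredients announced in the introduction (classification of surfaces, Tate's conjecture, lifting of $K3$ surfaces) is needed for this sub-proposition, in contrast to what will eventually be required to control the transcendental part of $V$.
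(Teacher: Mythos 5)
Your proof is correct and takes essentially the same route as the paper: identify the natural $\Q$- (resp.\ $\Z$-) structure on $V_{\mathrm{alg}}$ inside $NS(\bar X)$, use the Hodge index theorem to see that the intersection form is negative definite on the part orthogonal to the ample class, and conclude that the isometry $\varphi$ has finite order there. The only variation is the final step, where the paper applies Kronecker's theorem to the integral characteristic polynomial of the orthogonal, semisimple $\varphi$, whereas you observe directly that the isometry group of a definite integral lattice is finite (a discrete subgroup of a compact orthogonal group) -- two equivalent routine closings of the same argument.
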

\begin{proof} The $\Q_\ell$-vector space $V_{{\rm alg}}$ has a
 natural $\Z$-structure $NV$ defined by the maximal $\varphi$-stable
subgroup
\[NV\subset [\Theta]^{\perp}\subset NS(\bar{X}),\]
where $\perp$ is the orthogonal complement with respect to the 
intersection poduct on $NS(\bar{X})$. One has 
$V_{{\rm alg}}=NV\otimes\Q_{\ell}$, and this identification is  
 $\varphi$-equivariant.  

Now $NV$ comes equipped with the intersection product $NV\otimes NV\to 
\Z$, which is non-degenerate after $\otimes{\mathbb R}$, and is {\em 
negative definite}, by the Hodge index Theorem for divisors. This pairing is 
clearly $\varphi$-stable as well, so that $\varphi$ can be considered 
as an orthogonal transformation for a Euclidean space structure on 
$NV\otimes{\mathbb R}$. In particular it is semi-simple. Moreover 
all eigenvalues of $\varphi$ on  $NV\otimes\C$  are of absolute value 1. Since these eigenvalues are 
algebraic integers (in fact units), and the characteristic polynomial 
of $\varphi$ on $NV\otimes\Q$ has rational coefficients, the 
eigenvalues are in fact algebraic integers, all of whose conjugates have 
absolute value 1; thus they are roots of unity, by a 
well-known theorem of Kronecker. This finishes the proof. 
\end{proof}
One obtains the immediate corollary:
\begin{cor} \label{alg}
Theorem~\ref{thm1} holds whenever 
$$V=V_{{\rm alg}},$$ or equivalently, whenever 
$$NS(\bar{X})\otimes\Q_{\ell}=H^2_{{\rm \acute{e}t}}(\bar{X},\Q_{\ell}
(1)).$$
More generally, let 
$$V_{{\rm tr}}=V\cap NS(\bar{X})^{\perp},$$
where the orthogonal $\perp$ is taken in $H^2(\bar X, \Q_\ell)(1)$. 
This is a $\varphi$-stable subspace of $V$, which may be defined as the 
cohomology of a suitable Chow motive, and the conclusion of  Theorem~\ref{thm1}
is equivalent to a similar statement about the eigenvalues of $\varphi$ 
on the subspace $V_{{\rm tr}}$. 
\end{cor}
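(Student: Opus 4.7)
The plan is to give an explicit description of $V$, from which all assertions of the corollary will follow by linear algebra. Since $\varphi$ acts as the identity on $H^4_{{\rm \acute{e}t}}(\bar X,\Q_\ell(2))$, the cup product pairing on $H^2_{{\rm \acute{e}t}}(\bar X,\Q_\ell(1))$ is $\varphi$-invariant, so the largest $\varphi$-stable subspace of $[\Theta]^\perp$ is
\[V=\bigcap_{n\in\Z}\varphi^n([\Theta]^\perp)=W^\perp,\qquad W=\sum_{n\in\Z}\Q_\ell\cdot\varphi^n[\Theta]\subset NS(\bar X)\otimes\Q_\ell.\]
Since $W\subset NS(\bar X)\otimes\Q_\ell$, one has $NS(\bar X)^\perp\subset W^\perp=V$, and consequently $V_{{\rm tr}}=V\cap NS(\bar X)^\perp=NS(\bar X)^\perp$ is the whole transcendental part of $H^2_{{\rm \acute{e}t}}(\bar X,\Q_\ell(1))$.

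Next I would invoke the non-degeneracy of the intersection form on $NS(\bar X)\otimes\Q$ (Hodge index, together with the coincidence of numerical and $\ell$-adic homological equivalence for divisors) to obtain a $\varphi$-equivariant orthogonal splitting
\[H^2_{{\rm \acute{e}t}}(\bar X,\Q_\ell(1))=\bigl(NS(\bar X)\otimes\Q_\ell\bigr)\oplus NS(\bar X)^\perp.\]
Splitting any $v\in V=W^\perp$ according to this decomposition returns both summands to $V$ (the $NS(\bar X)^\perp$-component lies there automatically, the $NS(\bar X)\otimes\Q_\ell$-component by subtraction), giving the key direct sum $V=V_{{\rm alg}}\oplus V_{{\rm tr}}$ of $\varphi$-stable subspaces.

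With this in hand, both assertions of the corollary become bookkeeping. For the first part, $V=V_{{\rm alg}}$ is equivalent to $V_{{\rm tr}}=NS(\bar X)^\perp=0$, equivalently $NS(\bar X)\otimes\Q_\ell=H^2_{{\rm \acute{e}t}}(\bar X,\Q_\ell(1))$, and in that case the preceding proposition gives $\varphi$ of finite order on $V=V_{{\rm alg}}$. For the general statement, $V_{{\rm tr}}=NS(\bar X)^\perp$ is visibly $\varphi$-stable, and it is the \'etale realization of the Chow submotive of $h^2(X)$ complementary to its algebraic part, cut out by the projector built from a $\Q$-basis of $NS(\bar X)\otimes\Q$ together with its dual basis for the intersection form (which again lies in $NS(\bar X)\otimes\Q$ by non-degeneracy); this places it in the framework of Proposition~\ref{eigen}(iii). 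Combining the previous proposition (finite order on $V_{{\rm alg}}$) with the splitting $V=V_{{\rm alg}}\oplus V_{{\rm tr}}$ then makes Theorem~\ref{thm1} equivalent to the same assertion on $V_{{\rm tr}}$.

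The only substantive input in this reduction is the non-degeneracy of the intersection pairing on $NS(\bar X)\otimes\Q_\ell$; everything else is formal. The corollary therefore contains no new content, but localises the substance of Theorem~\ref{thm1} on the transcendental lattice $V_{{\rm tr}}$, where algebraic cycle arguments no longer apply and the genuine work of the paper must be done.
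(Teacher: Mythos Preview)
Your argument is correct and supplies exactly the linear-algebra details the paper leaves implicit (the paper gives no proof, calling it an ``immediate corollary'' of the preceding proposition). Your identification $V=W^\perp$ with $W=\langle\varphi^n[\Theta]\rangle$ and the resulting observation that $V_{\rm tr}=NS(\bar X)^\perp$ is the \emph{entire} transcendental part of $H^2$ is in fact sharper than what the paper writes down, and makes the equivalence $V=V_{\rm alg}\Leftrightarrow NS\otimes\Q_\ell=H^2$ transparent.
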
 
 \begin{rmk} \label{rmk} The cup product also induces a 
non-degenerate symmetric bilinear form on $V_{{\rm tr}}$ with values in 
$\Q_{\ell}$. Since $\varphi$ is an automorphism of $X$, $\varphi$ acts 
as an orthogonal transformation of $V_{{\rm tr}}$ with respect to this bilinear 
form.  \end{rmk}

\section{Using classification of surfaces}

Now we consider the possibilites for the surface $X$, from the 
perspective of the Enriques-Bombieri-Mumford classification of surfaces 
in arbitrary characteristic (a convenient reference for most of what we 
need is the book \cite{Badescu}). Since we need only 
consider surfaces where  $V\neq V_{{\rm alg}}$, {\em we may assume that the Kodaira 
dimension of $\bar{X}$ is $\geq 0$}. 

As a consequence, from \cite[Corollary~10.22 ]{Badescu},  since 
 $X$ has Kodaira dimension $\geq 0$, the birational 
equivalence class of $\bar{X}$ has a unique non-singular minimal 
model, say $X_0$. Increasing the finite field $\F_q$, we may assume the 
model $X_0$, and the morphism $\bar{X}\to X_0$, are defined over 
$\F_q$. Since $\varphi$ acts on $X$, it acts on its function field, and
thus on this unique minimal model (\cite[ Theorem~10.21]{Badescu}). 
So the automorphism  $\varphi$ of
$\bar{X}$ descends to 
$X_0$, and the spaces $V_{{\rm tr}}$ of $\bar{X}$ and $X_0$ are naturally 
identified.  Hence {\em we are reduced to the case when $\bar{X}$ is 
itself minimal}, i.e., $\bar{X}$ does not contain any exceptional curves of 
the first kind.  

We may also assume {\em $X$ is not of general type}. Indeed, 
$\varphi$ yields a graded automorphism of the canonical 
model ${\rm Proj} \big(\oplus H^0(X,\omega_X^{\otimes n})\big)$ of $X$, which
(by 
Lemma~\ref{easy}) has finite order. Hence, some power of $\varphi$ is 
an automorphism which acts trivially on the function field of $X$, since 
it is trivial on the canonical model, which (if $\bar{X}$ is of 
general type) is birational to $X$. Thus a power of $\varphi$ agrees 
with the identity on a Zariski dense subset, and hence equals 
the identity.

Thus, we need only focus on the cases when the Kodaira dimension of 
$X$ is 0 or 1. 

\begin{prop}\label{kod=1}
In the situation of Theorem~\ref{thm1}, suppose $X$ has 
Kodaira dimension 1. Then the conclusion of Theorem~\ref{thm1} holds. 
\begin{proof}
Let $C={\rm Proj}\big(\oplus_{n\geq 0}H^0(X,\omega_X^{\otimes 
n})\big)$ (this graded ring is finitely generated; 
see \cite[ Theorem~9.9 ]{Badescu}). Then from classification (same result in 
\cite{Badescu}) there is a morphism 
\[f:X\to C\]
 which gives rise to an elliptic or quasi-elliptic 
fibration, i.e., the generic fiber is a regular projective curve 
which has arithmetic genus 1, and the geometric generic fiber is either 
an elliptic curve, or is an irreducible rational curve with an ordinary 
cusp (this can occur only in characteristics 2 and 3,  
 \cite[Theorem~ 7.18]{Badescu}). 

In the quasi-elliptic case, the Leray spectral sequence for \'{e}tale 
cohomology implies that 
\[NS(\bar{X})\otimes\Q_{\ell}=
H^2_{{\rm \acute{e}t}}(\bar{X},\Q_{\ell}(1)),\] 
since the stalks of $R^1f_*{\mathbb Q}_{\ell}(1)$ at geometric points of 
$C$ vanish.

Hence we may assume without loss of generality that {\em $f$ is an 
elliptic fibration}.

Now we note that $\varphi$ induces a graded automorphism of the 
canonical ring $\oplus_{n\geq 0}H^0(X,\omega_X^{\otimes n})$, and thus 
an automorphism of $C$, which we may also denote by $\varphi$, such 
that $f:X\to C$ is $\varphi$-equivariant. 

As usual, after replacing $\varphi$ by a power, we may assume (from 
Lemma~\ref{easy}) that the induced automorphism of the canonical ring 
(and thus of the base curve $C$) is trivial.

Now for any morphism $D\to C$, $\varphi$ acts in a canonical way on the 
total space of the base changed morphism $X\times_CD\to D$, preserving 
the fibers; we denote this induced automorphism also by $\varphi$.  
Hence, if $D\to C$ is a finite morphism of nonsingular curves, so that 
$X\times_CD$ is an integral projective surface, $\varphi$ also acts on 
the normalization of $X\times_CD$, which is a normal projective 
surface, denoted by $X_D$. Making a suitable such base 
change $g:D\to C$, and normalizing, we may arrange that the resulting 
elliptic fibration $X_D\to D$ has a section. Clearly the singular locus 
of the normal surface $X_D$ is stabilized (as a set) by the 
automorphism. Consider the minimal resolution of singularities 
$\tilde{X}\to X_D$. If we write it as the blow-up of some ideal sheaf 
whose radical defines the 
singular locus, we may assume (after replacing $\varphi$ by a power) 
that this ideal sheaf is stabilized by $\varphi$, so that $\varphi$ 
lifts canonically to an automorphism of the blow-up $\tilde{X}$ (since 
$\varphi$ clearly determines an automorphism of the Rees algebra 
sheaf).

The morphism $\tilde{X}\to X$ is a generically finite proper morphism 
between smooth projective surfaces, which is $\varphi$-equivariant. 
We may choose a polarization $[\tilde{\Theta}]$ for $\tilde{X}$ which is 
the sum of the pullback of $[\Theta]$ and a divisor class with support 
in the exceptional divisor of $\tilde{X}\to X_D$. (This is a 
consequence of the negative definiteness of the intersection 
pairing on the exceptional curves, and the Nakai-Moishezon ampleness 
criterion.)  Then the resulting space 
$V(\tilde{X},[\tilde{\Theta}],\varphi)$ contains  
$V=V(X,[\Theta],\varphi)$ as a $\varphi$-stable subspace.

Thus, we are further reduced to considering the situation where the map 
$f:X\to C$, determined by the canonical divisor of $X$, is an elliptic 
fibration which has a section, and $\varphi$ is an automorphism of $X$ 
preserving the fibers. 

Let $U\subset C$ be the maximal open subset over which $f$ is smooth, so 
that $f_U:f^{-1}(U)\to U$ is an abelian scheme of relative dimension 1. 
Let $\bar{C}$, $\bar{U}$, ${f^{-1}(\bar U)}$ be the 
corresponding schemes over $\bar{\F}_p$.   
 The localisation sequence 
\[\oplus_{s\in \Sigma} H^2_{X_{\bar s}}(\bar{X}, \Q_\ell)\to
H^2_{{\rm \acute{e}t}}(\bar{X},\Q_{\ell}(1))\to 
H^2_{{\rm\acute{e}t}}({f^{-1}(\bar U)},\Q_{\ell}(1))\]
is exact and $\varphi$-equivariant, where $\Sigma$ is the discriminant of $f$.
On one hand, each summand $H^2_{X_{\bar s}}(\bar{X}, \Q_\ell)$ is (up to a Tate
twist) 
the free abelian group on the irreducible components of the geometric fiber
$X_{\bar{s}}$, 
and $\varphi$ acts via a permutation on the classes of these components. Thus
$\varphi$ has finite order on $\oplus_{s\in \Sigma} H^2_{X_{\bar s}}(\bar{X},
\Q_\ell)$. 

On the other hand, 
any automorphism of (the total space of) the abelian scheme $f^{-1}(U)$, 
which is compatible with the structure morphism $f_U$, is 
the composition of a  group-scheme automorphism 
(which has finite order) and a translation, since this is true on the 
elliptic curve which forms the geometric generic fiber. Replacing 
$\varphi$ by a power, we may assume further that $\varphi$ acts on 
$f^{-1}(U)$ as a translation by a section, with respect to the abelian 
scheme structure. 

We claim that, in this situation, $\varphi$ is unipotent on
$H^2_{{\rm \acute{e}t}}(f^{-1}(\bar U),\Q_{\ell}(1))$.
Indeed, the Leray spectral sequence yields a 
$\varphi$-equivariant exact 
sequence
\[0\to H^1_{{\rm \acute{e}t}}(\bar{U},R^1f_*\Q_{\ell}(1))\to 
H^2_{{\rm \acute{e}t}}(f^{-1}(\bar U),\Q_{\ell}(1)) \to 
 H^0_{{\rm \acute{e}t}}(\bar{U},R^2f_*\Q_{\ell}(1))\to 0\]
The action of $\varphi$ on $H^0_{{\rm
\acute{e}t}}(\bar{U},R^2f_*\Q_{\ell}(1))$ is the identity, while the action of
$\varphi$ in $R^1(f_U)_*\Q_\ell$ is trivial.  

On the other hand, the composition
\[H^2_{{\rm \acute{e}t}}(\bar{X},{\mathbb Q}_{\ell}(1))\to 
H^2_{{\rm \acute{e}t}}(f^{-1}(\bar U),{\mathbb Q}_{\ell}(1))\to 
 H^0_{{\rm \acute{e}t}}(\bar{U},R^2f_*\Q_{\ell}(1))\cong {\mathbb Q}_{\ell}\]
is identified with the intersection product with the cohomology class of 
a geometric fiber, from the projection formula. In particular, $V_{{\rm tr}}$ 
is contained in the kernel of the restriction map
\[H^2_{{\rm \acute{e}t}}({f^{-1}(\bar U)},\Q_{\ell}(1)) \to 
 H^0_{{\rm \acute{e}t}}(\bar{U},R^2f_*\Q_{\ell}(1)),\]
that is, 
\[V_{{\rm tr}}\subset H^1_{{\rm \acute{e}t}}(\bar{U},R^1f_*\Q_{\ell}(1))\]
as a $\varphi$-stable subspace on which $\varphi$ acts trvially. So the
 the
eigenvalues of $\varphi$ on the whole group  $H^2_{{\rm \acute{e}t}}(\bar
X,\Q_{\ell}(1)$ are roots of unity and $\varphi$ has finite order on
$V_{{\rm tr}}$. This finishes the proof.

\end{proof}
\begin{rmk} Our proof shows that if $X$ is elliptic, the eigenvalues of
$\varphi$  are roots of unity on the whole 
$H^2_{{\rm \acute{e}t}}(\bar X, \Q_\ell)$, that is  $\varphi$ acts
quasi-unipotently on it. 
\end{rmk}
\end{prop}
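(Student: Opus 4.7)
The plan is to use the Iitaka fibration $f\colon X\to C$ determined by the canonical ring $\bigoplus_{n\ge 0}H^0(X,\omega_X^{\otimes n})$. Since $X$ has Kodaira dimension $1$, this produces a morphism to a smooth projective curve $C$ whose generic fiber has arithmetic genus $1$, and the classification furnishes either an elliptic or a quasi-elliptic fibration. Because $\varphi$ preserves the canonical ring it descends to an automorphism of $C$, and by Lemma~\ref{easy} we may replace $\varphi$ by a power so that its action on $C$ is trivial, i.e.\ so that $\varphi$ preserves every fiber of $f$.

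In the quasi-elliptic case (only possible in characteristic $2$ or $3$), the geometric general fiber is a cuspidal rational curve, so the stalks of $R^1f_*\Q_\ell$ at geometric points of $C$ vanish. The Leray spectral sequence then forces $NS(\bar X)\otimes\Q_\ell = H^2_{{\rm \acute{e}t}}(\bar X,\Q_\ell(1))$, and Corollary~\ref{alg} disposes of this case; so the essential situation is the elliptic one.

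For an elliptic fibration we first reduce to the case where $f$ admits a section, by passing to a finite cover $D\to C$, normalizing $X\times_C D$, and resolving singularities. The main technical obstacle is to carry $\varphi$ equivariantly through all three steps: after a further power of $\varphi$, the singular locus of the normalization and an ideal sheaf supported there can be chosen $\varphi$-stable, so $\varphi$ lifts canonically to the resolved surface $\tilde X$, and one verifies that the transcendental subspace for $(X,[\Theta])$ embeds into the analogue for $\tilde X$ under a polarization built from the pullback of $[\Theta]$ together with an exceptional divisor (using Nakai--Moishezon). Once $f\colon X\to C$ carries a section, the smooth locus $f^{-1}(U)\to U$ is an abelian scheme of relative dimension $1$, and any fiber-preserving automorphism of it factors as a group-scheme automorphism (finite order, since $\mathrm{Aut}(E,0)$ is finite for an elliptic curve $E$) followed by a translation by a section; so after yet one more power, the restriction of $\varphi$ to $f^{-1}(U)$ is such a translation.

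Translations of an abelian scheme act trivially on $R^if_*\Q_\ell$, hence in the Leray sequence
\[0\to H^1_{{\rm \acute{e}t}}(\bar U, R^1f_*\Q_\ell(1))\to H^2_{{\rm \acute{e}t}}(f^{-1}(\bar U),\Q_\ell(1))\to H^0_{{\rm \acute{e}t}}(\bar U, R^2f_*\Q_\ell(1))\to 0\]
the outer terms carry trivial $\varphi$-actions and $\varphi$ is unipotent in the middle. The localization sequence identifies the kernel of the restriction $H^2(\bar X,\Q_\ell(1))\to H^2(f^{-1}(\bar U),\Q_\ell(1))$ with a sum of $\Q_\ell$-vector spaces freely spanned (up to a Tate twist) by components of the singular fibers, on which $\varphi$ acts by a finite permutation. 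Hence all eigenvalues of $\varphi$ on $H^2_{{\rm \acute{e}t}}(\bar X,\Q_\ell(1))$ are roots of unity. By the projection formula, the second Leray map equals cup product with the algebraic class of a fiber, so it vanishes on $V_{\mathrm{tr}}\subset NS(\bar X)^{\perp}$; thus $V_{\mathrm{tr}}$ injects into $H^1_{{\rm \acute{e}t}}(\bar U, R^1f_*\Q_\ell(1))$, where $\varphi$ acts trivially. Combined with the finite-order action on $V_{\mathrm{alg}}$ established earlier and the orthogonal decomposition $V=V_{\mathrm{alg}}\oplus V_{\mathrm{tr}}$ coming from non-degeneracy of cup product on $NS\otimes\Q_\ell$ (Hodge index), this yields that $\varphi$ has finite order on all of $V$.
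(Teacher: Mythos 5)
Your proof follows the paper's argument step by step: construct the Iitaka fibration $f\colon X\to C$ from the canonical ring, dispose of the quasi-elliptic case via vanishing of $R^1f_*\Q_\ell$ and Corollary~\ref{alg}, make $\varphi$ act trivially on $C$ via Lemma~\ref{easy}, perform a $\varphi$-equivariant base change plus normalization plus resolution to reach a fibration with a section, reduce to a translation on the abelian scheme over the smooth locus, and then combine the localization sequence (permutation action on components of singular fibers), the Leray spectral sequence (unipotence on $H^2$ of the open part), and the projection formula to place $V_{\rm tr}$ inside $H^1(\bar U, R^1f_*\Q_\ell(1))$ where $\varphi$ acts trivially. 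This is the same approach as the paper's proof, with only cosmetic reordering of the steps.
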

\begin{prop}\label{k=0,other}
Suppose that, in the sitution of Theorem~\ref{thm1}, the 
surface $\bar{X}$ is minimal of Kodaira dimension 0, and 
$\bar{X}$ is not a K3 or abelian surface. Then Theorem~\ref{thm1} 
holds for $X$.
\end{prop}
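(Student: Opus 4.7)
The plan is to invoke the Enriques--Bombieri--Mumford classification of minimal surfaces of Kodaira dimension $0$ in arbitrary characteristic: such a surface $\bar X$ is either a K3 surface, an abelian surface, an Enriques surface (classical or non-classical), a bielliptic surface, or (only in characteristics $2$ and $3$) a quasi-bielliptic surface (see \cite[Chapter~10]{Badescu}). Having excluded the K3 and abelian cases by hypothesis, we are reduced to showing the result when $\bar X$ is Enriques, bielliptic, or quasi-bielliptic.

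The strategy in each remaining case is to show that the Picard number equals the second Betti number, so that
\[NS(\bar X)\otimes\Q_\ell \;=\; H^2_{{\rm \acute{e}t}}(\bar X,\Q_\ell(1)).\]
Once this is established, $V=V_{{\rm alg}}$, and Corollary~\ref{alg} immediately yields the conclusion of Theorem~\ref{thm1}. Concretely, for an Enriques surface one has $b_2(\bar X)=10$, and the standard computation (using $p_g=0$, the vanishing of the transcendental part of $H^2$, and the fact that every numerical class on an Enriques surface is represented by a divisor) gives $\rho(\bar X)=10$; for a bielliptic surface $\bar X$ is an \'etale quotient of a product of two elliptic curves by a finite group, so $b_2(\bar X)=2=\rho(\bar X)$; the same numerical invariants hold for the quasi-bielliptic surfaces that appear in characteristic $2$ and $3$, where the classification of \cite{Badescu} again gives $b_2=\rho=2$.

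The main obstacle is simply to verify the equality $b_2=\rho$ in the non-classical cases (non-classical Enriques in characteristic $2$ and quasi-bielliptic surfaces in characteristics $2$ and $3$), where one cannot import the statement directly from the complex analytic theory. For these, one must appeal to the characteristic-$p$ classification results giving the explicit description of $NS(\bar X)$: for Enriques surfaces a numerical $\Z$-lattice of rank $10$ is generated by divisor classes, and for (quasi-)bielliptic surfaces $NS(\bar X)$ is generated by a fiber and a section of the natural elliptic (or quasi-elliptic) fibration $\bar X\to \P^1$ together with a fiber of the Albanese map, accounting for all of $b_2=2$.

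If one wished to avoid the computation for Enriques surfaces, there is an alternative route: every Enriques surface carries a (quasi-)elliptic fibration over $\P^1$, and one could try to adapt the argument of Proposition~\ref{kod=1} to the Kodaira-dimension-$0$ setting. However, the lattice-theoretic argument above is cleaner, so I would present it as the primary approach and only fall back on fibration arguments if a subtlety arose in positive characteristic.
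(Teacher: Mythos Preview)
Your treatment of Enriques surfaces coincides with the paper's: both invoke $\rho=b_2=10$ (the paper citing \cite[Theorem~4]{Bombieri-Mumford}, attributed to an argument of Artin via the Brauer group) and then apply Corollary~\ref{alg}.

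For the (quasi-)bielliptic surfaces the two approaches diverge. You aim to show $\rho=b_2=2$ directly and again invoke Corollary~\ref{alg}; this is a valid route, and arguably the shortest once one has $b_2=2$ from Noether's formula ($c_2=12\chi(\sO_X)=0$, $b_1=2$) together with two independent divisor classes in $NS$. The paper instead proceeds geometrically: after arranging a $\varphi$-fixed rational point and replacing $\varphi$ by a power so that the induced group-scheme automorphism of ${\rm Alb}(\bar X)$ is trivial, it uses the Albanese map itself as an elliptic or quasi-elliptic fibration over the elliptic curve ${\rm Alb}(\bar X)$, and reruns the Leray spectral sequence argument of Proposition~\ref{kod=1}. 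The paper's route has the advantage of recycling machinery already in place and avoiding any verification of Picard numbers in positive characteristic; your route is more direct but requires that verification. Note also that your description of the (quasi-)bielliptic geometry is inaccurate: the (quasi-)elliptic fibration in question is the Albanese map, whose base is an elliptic curve rather than $\P^1$, and it need not admit a section; the two classes spanning $NS\otimes\Q$ are the fiber classes of the two natural fibrations on $(E\times F)/G$, not a fiber and a section.
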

\begin{proof} As stated in \cite[page~1]{Bombieri-Mumford}, the 
minimal surfaces with Kodaira dimension 0 fall into 4 classes: $K3 $
surfaces, Enriques surfaces (both of ``classical'' and ``non-classical'' 
type), abelian surfaces and surfaces fibered over their Albanese, which 
is an elliptic curve (and the fibrations are either elliptic or 
quasi-elliptic). 

In case the Albanese variety of $\bar{X}$ is an elliptic curve, 
we  may assume (after increasing $\F_q$ if needed) that $X$ has an 
$\F_q$-rational $\varphi$-fixed point. Then ${\rm Alb}\,(\bar{X})$ 
and the Albanese mapping are defined over $\F_q$, and $\varphi$ induces 
a unique (group-scheme) automorphism of the elliptic curve ${\rm 
Alb}\,(\bar{X})$ making  the Albanese mapping 
$\varphi$-equivariant. Since the automorphism group of an 
elliptic curve is finite, replacing $\varphi$ by a power, we reduce to 
the  situation  where the action on ${\rm Alb}(\bar X)$ is trivial.

Now we may argue just as in the proof of Proposition~\ref{kod=1}, using 
the Albanese mapping instead of the mapping deduced from the canonical 
ring. Again, the case when $V_{{\rm tr}}$ is possibly nontrivial is for an 
elliptic fibration, and a similar Leray spectral sequence argument goes 
through. 

In the case of Enriques surfaces, including the non-classical ones, in 
fact one has  $V=V_{{\rm alg}}$ (see \cite[Theorem~4]{Bombieri-Mumford}), 
by an argument of Artin involving the Brauer group, so we conclude by Corollary
\ref{alg}.
\end{proof}

\section{The case of an abelian surface}

Any automorphism of the abelian surface $\bar{X}$ is the 
composition of a group automorphism and a translation by a 
closed point, where the translation has finite order. Hence, 
increasing the finite field $\F_q$ and replacing 
$\varphi$ by a power, if necessary, we may assume $X$ is an 
abelian surface over $\F_q$, and $\varphi$ is a group-scheme 
automorphism of $X$. We may assume that all the endomorphisms of 
$X$ are defined over $\F_q$. Why is this possible?

We may also deduce (by again increasing $\F_q$ and replacing $\varphi$ 
by a power, if necessary) that the validity of Theorem~\ref{thm1} for 
$X$ depends only on the isogeny class of $\bar{X}$. This follows 
because for any $n>1$, $\varphi$ acts as an automoprhism of finite order 
on the $n$-torsion $X(\bar{\F}_p)[n]$, and thus for any 
isogeny $\bar{X}\to X'$, some power of $\varphi$ acts trivially on 
its kernel, and so  a power of $\varphi$ descends to a compatible
automorphism of 
$X'$. 

Let $F:\bar{X}\to\bar{X}$ be the {\em geometric Frobenius 
morphism} associated to $X$, considered as an $\F_q$-scheme; thus 
\[F:\bar{X}\to\bar{X}\] is an $\bar{\F}_p$-morphism of 
degree $q^2$, which acts on each $H^i_{{\rm \acute{e}t}}(\bar{X},\Q_{\ell})$, 
with (by Deligne's theorem) a characteristic polynomial with 
$\mathbb Z$-coefficients, whose  (algebraic integer) roots all have 
complex absolute value  $(\sqrt{q})^i$. We may assume without loss 
of generality that $q$ is an even power of $p$, so that these absolute 
values are integers.

  We  now define   $P(t) 
\in \Z[t] $ to be the (monic) {\it minimal polynomial} of $F$ viewed as an element of
the finite rank torsion-free  $\Z$-module ${\rm
End}(X)$. Thus $P(t)\in \Q[t]$ is the minimal polynomial of $F$ as an element in 
${\rm End}(X)\otimes \Q$, and $P(t) \in \Q_\ell[t]$ is the minimal polynomial of
$F$ as an element in 
\[{\rm End}(X) \otimes \Q_l\subset  {\rm End}(H^1(\bar X, \Q_\ell))={\rm End}(H^1(\bar
X, \Q_\ell(j)))\mbox{ for all $j\in \Z$.}\]
 From the  Tate's theorems \cite{Tate} (see in particular Theorem
2; see also \cite[ Appendix~1,~Theorem~3]{Mumford}), proving the Tate
Conjecture  for endomorphisms of abelian varieties over finite fields, we know in 
particular that {\em $P(t)$ has no multiple roots}, and is thus a 
product of distinct monic irreducible polynomials which are pairwise 
relatively prime. Equivalently, $F$ acts semisimply on 
$H^1_{{\rm \acute{e}t}}(\bar{X},\Q_{\ell})$.

We consider also the {\em characteristic polynomial} $\in \Z[t]$  of $F$,
as 
defined in \cite{Mumford}, \S19, Theorem~4, which is the same, viewed in $
\Q_\ell[t]$, as the characteristic  polynomial of
$F$ as an element in ${\rm End}(H^1(\bar{X}, \Q_\ell))$.

Since $\dim_{{\mathbb Q}_{\ell}} 
H^1_{{\rm \acute{e}t}}(\bar{X},\Q_{\ell})=4$, it has degree $4$.

 In case the minimal polynomial is irreducible over 
${\mathbb Q}$, the characteristic polynomial must be a power of this 
minimal polynomial $P(t)$, and so the degree of the minimal polynomial $P(t)$ must 
divide $4$.

We now distinguish between several cases.\\[.1cm]
{\sf Case 1:} 
 {\it The minimal polynomial $P(t)$ is reducible over $\mathbb 
Q$.} 

In this case, $\bar{X}$ is not simple, since its endomorphism 
algebra has zero divisors, and in fact $\bar{X}$ must then be 
isogenous to a product of two mutually non-isogeneous elliptic curves 
(this is the only way to have two mutually coprime factors of $P(t)$). 
But then $\bar{X}$ has a finite group of automorphisms as an 
abelian variety, since this is the case for an elliptic curve, and any 
automorphism of a product of two non-isogeneous elliptic curves is a 
product of automorphisms on each of the two factors. Hence, in this 
situation, $\varphi$ cannot have infinite order, and we have nothing to 
prove. \\[.1cm]
{\sf Case 2: } $P(t)$ {\it is a linear polynomial.}

Then the characteristic polynomial is a power of a linear polynomial, 
and from the Tate conjecture, this implies that $\bar{X}$ is 
isogenous to $E\times E$ for a supersingular elliptic curve $E$. But in 
this case, since the endomorphism algebra of $E$ is a quaternion 
division algebra, which has dimension 4 over $\mathbb Q$, the Picard 
number of $E\times E$ is 6, which is also the second Betti number; one 
thus has that $V=V_{{\rm alg}}$, and $V_{{\rm tr}}=0$,  so we conclude
with Corollary \ref{alg}. \\[.1cm]
{\sf Case 3:}  $P(t)$ {\it  is an irreducible quadratic polynomial over} 
$\mathbb Z$. 

Let $\lambda$ be a complex root of $P(t)$. Then $\lambda$ is a non-real 
complex number, with $|\lambda|^2=\lambda\bar{\lambda}=q$. Indeed, if 
$\lambda$ is a real root, it must be an integer, since $q$ is an even 
power of $p$; however $P(t)$ is irreducible. 

Hence we must have that $P(t)=(t-\lambda)(t-\bar{\lambda})$, where 
${\mathbb Q}(F)\cong {\mathbb Q}(\lambda)$ is an imaginary quadratic 
field. Clearly the characteristic polynomial of $F$ on 
$H^1_{{\rm \acute{e}t}}(\bar{X},\Q_{\ell})$ is just $P(t)^2$ (see 
\cite[  Appendix~1,~Theorem~3~(e)]{Mumford}).

Since $X$ is an abelian surface, the cup product gives an isomorphism
\[H^2_{{\rm \acute{e}t}}(\bar{X},\Q_{\ell})=\left(\bigwedge^2 
H^1_{{\rm \acute{e}t}}(\bar{X},\Q_{\ell})\right).\]
Thus we see that the characteristic polynomial of $F$ on 
$H^2_{{\rm \acute{e}t}}(\bar{X},\Q_{\ell})$ has to be 
\[(t-\lambda^2)(t-\bar{\lambda}^2)(t-|\lambda|^2)^4=
(t-\lambda^2)(t-\bar{\lambda}^2)(t-q)^4.\]

From the Tate conjecture for divisors on $\bar{X}$, we conclude 
that $V_{\rm tr}$ is 2-dimensional, and the characteristic polynomial of $F$ 
on $V_{\rm tr}$ is the quadratic polynomial
\[(t-\lambda^2)(t-\bar{\lambda}^2).\]
 
As noted before, the cup product on  
$H^2_{{\rm \acute{e}t}}(\bar{X},\Q_{\ell}(1))$ gives 
rise to a non-degenerate symmetric bilinear form on $V_{{\rm tr}}$ with 
values in ${\mathbb Q}_{\ell}$, and
$\varphi$ and $\frac{1}{q}F$ are orthogonal transformations with respect 
to this form, which commute.  

Now $\varphi$ is a unit in the ring of endomorphisms of the 
abelian variety $X$, and from the Tate conjecture, ${\rm 
End}(\bar{X})\otimes{\mathbb Q}$ is a central simple algebra of 
dimension $4$ over its centre $K={\mathbb Q}[F]$, the subalgebra 
generated by $F$ (this central simple algebra is either a matrix 
algebra of size 2,  or a quaternion divison algebra). Hence the reduced 
characteristic polynomial\footnote{This is the 
characteristic polynomial of $\varphi$, considered as an element of 
${\rm End}(\bar{X})\otimes_K\bar{K}$, which  
is the algebra of $2\times 2$ matrices over $\bar{K}$.} 
of $\varphi$, considered as an element of this endomorphism 
algebra, is a quadratic polynomial with coefficients in $K$,
\begin{equation}\label{eq1}
f(x)=x^2-{\rm Trd}(\varphi)x +{\rm Nrd}\,(\varphi)
\end{equation}
where ${\rm Trd}(\varphi)\in K$ and ${\rm Nrd}(\varphi)\in K$ are the values of
the reduced norm and trace of the 
central simple algebra.  

Now on 
\[H^1_{{\rm \acute{e}t}}(\bar{X},{\mathbb Q}_{\ell})\otimes{\bar {\mathbb 
Q}_{\ell}}\]
we may diagonalize $F$. Fixing an embedding of ${\mathbb 
Q}_{\ell}$ into the complex number field $\mathbb C$, we can
split the resulting complex vector space into its $\lambda$ and 
$\bar{\lambda}$ eigenspaces for the action of $F$. 

Clearly $\varphi$, considered as a complex linear transformation, 
stabilizes this decomposition, since  $\varphi$ commutes with $F$. 
Further, on each of the 2-dimensional $F$-eigenspaces, on which $F$ acts
as $\lambda\cdot {\rm Id}$ and $\bar \lambda \cdot {\rm Id}$,
$\varphi$ has the appropriate  characteristic polynomial (with 
$\mathbb C$-coefficients) $\sigma(f)$ or $\bar{\sigma}(f)$, where 
$\sigma$, $\bar{\sigma}$ are the embeddings of $K$ into $\mathbb C$ 
determined by $\sigma(F)=\lambda$, $\bar{\sigma}(F)=\bar{\lambda}$ 
(resulting in two conjugate embeddings $K[x]\hookrightarrow 
{\mathbb C}[x]$, denoted the same way). 

The upshot is that, on the 2-dimensional complex vector space 
\[V_{{\rm tr}}\otimes_{{\mathbb Q}_{\ell}}\mathbb C\] 
 $\varphi$ is diagonalizable, and has eigenvalues
$\sigma({\rm  Nrd}\,(\varphi))$ and $\bar{\sigma}({\rm Nrd}\,(\varphi))$.
But ${\rm Nrd}(\varphi)\in K$ is actually an algebraic integer, which is 
a unit. Since $K$ is an imaginary quadratic field, ${\rm Nrd}(\varphi)$ 
must be a root of unity. Thus $\varphi$ is semisimple on $V_{{\rm tr}}$, with 
eigenvalues which are roots of unity, and this finishes the proof of 
Theorem~\ref{thm1} in this case.\\[.1cm]
{\sf Case 4:}  $P(t)$ {\it is an irreducible polynomial over} $\mathbb Q$
{\it of 
degree 4.} 

In this case, $F$ has 4 distinct algebraic non-real eigenvalues on
$H^1_{{\rm \acute{e}t}}(\bar{X},{\mathbb Q}_{\ell})$, which 
(once we embed ${\mathbb Q}_{\ell}$ into $\mathbb C$) are of the form 
$\lambda$, $\bar{\lambda}$, $\mu$, $\bar{\mu}$, with 
$|\lambda|^2=|\mu|^2=q$. 

In this case, on $H^2_{{\rm \acute{e}t}}(\bar{X},{\mathbb Q}_{\ell})$, 
$F$ has the eigenvalues $\lambda\mu$, $\bar{\lambda}\mu$, 
$\lambda\bar{\mu}$, $\bar{\lambda}\bar{\mu}$, 
which are again all distinct and non-real, as well as the eigenvalue $q$ with
multiplicity 2. From the Tate conjecture for $\bar{X}$, we see that 
$V_{{\rm tr}}$ is a 4-dimensional space, on which $F$ acts with the above 4 
distinct non-real eigenvalues. 

Now the Tate conjecture implies (see \cite[Theorem~2]{Tate},  or 
\cite[Appendix~1,~Theorem~3 ]{Mumford})  that the minimal
and 
characteristic polynomials  of $F$ on $H^1_{{\rm \acute{e}t}}(\bar{X},{\mathbb
Q}_{\ell})$ coincide, and we have that ${\rm 
End}(\bar{X})\otimes{\mathbb Q}={\mathbb Q}(F)=K$. Hence for some
polynomial $f(t)\in {\mathbb Q}[t]$, 
we have that $\varphi=f(F)\in K$. 

Fixing an embedding of ${\mathbb Q}_{\ell}$ into $\mathbb C$, 
we may choose a basis of eigenvectors $\{v_{\lambda}, 
v_{\bar{\lambda}},v_{\mu}, v_{\bar{\mu}}\}$for $F$
on $H^1_{{\rm \acute{e}t}}(\bar{X},{\mathbb Q}_{\ell})\otimes\mathbb C$, 
indexed by the corresponding eigenvalues. Then these are 
also eigenvectors for $\varphi$, with eigenvalues 
$f(\lambda),f(\bar{\lambda}), f(\mu), f(\bar{\mu})$ respectively.

Now $V_{{\rm tr}}\otimes\mathbb C$ has a resulting basis
$\{v_{\lambda}\wedge v_{\mu},
v_{\lambda}\wedge v_{\bar{\mu}},
v_{\bar{\lambda}}\wedge v_{\mu},
v_{\bar \lambda}\wedge v_{\bar \mu} \}.$
For this basis, it is then clear that $\varphi$ acts diagonally, 
with eigenvalues $f(\lambda)f(\mu)$, $f(\lambda)f(\bar{\mu})$, etc. 

We now observe that {\em $K$ is a CM field}, i.e., a totally non-real 
quadratic extension of a totally real number subfield.  Indeed, the 
distinct 
embeddings of $K$ into $\mathbb C$ are determined by $F\mapsto 
\lambda$, $F\mapsto \mu$, and their complex conjugate embeddings, so $K$ 
is totally non-real. It is also clear that the subfield $L={\mathbb 
Q}(F+\frac{q}{F})$ is totally real, and $K$ is a quadratic extension, 
since $|\lambda|^2=|\mu|^2=q$.

Since $\varphi\in K$ is an automorphism of $X$, it is a unit in the ring 
of integers ${\mathcal O}_K$. From the {\em Dirichlet unit theorem} 
(see for example \cite[ Chapter~2,~Theorem 5 ]{BS}), the 
unit groups of ${\mathcal O}_K$ and of the integers ${\mathcal O}_L$ in the
totally real 
subfield $L$ have the same rank. This means that, after 
replacing $\varphi$ by some power, we may assume $\varphi$ lies in $L$, 
and all of its eigenvalues on $H^1_{{\rm \acute{e}t}}(\bar{X},{\mathbb 
Q}_{\ell})\otimes\mathbb C$ are {\em real algebraic numbers}.

Hence on $H^1_{{\rm \acute{e}t}}(\bar{X},{\mathbb Q}_{\ell})\otimes 
\mathbb C$, $\varphi$ has 
two distinct eigenvalues $f(\lambda)=f(\bar{\lambda})$ and 
$f(\mu)=f(\bar{\mu})$, each with multiplicity 2 (since $\varphi$ has 
infinite order, and determinant
$1$ (as the degree of $\varphi$ is $1$), these two real numbers must be 
distinct, and satisfy $f(\lambda)^2f(\mu)^2=1$). But this implies 
$\varphi$ acts on 
$V_{{\rm tr}}\otimes\bar{{\mathbb Q}_{\ell}}$ as the real scalar 
$f(\lambda)f(\mu)$, which must be $\pm1$.

\section{The case of a $K3$ surface}

In the proof of Theorem~\ref{thm1}, there is one case remaining: the 
case when $X$ is a $K3$ surface. As in \cite{Bombieri-Mumford}, 
this means $\bar{X}$ is a smooth, projective minimal surface, and 
we have the properties
\[\omega_X\cong {\mathcal O}_X,\;\; H^1(X,{\mathcal 
O}_X)=0, \;\;
\dim_{\Q_{\ell}}H^2_{{\rm \acute{e}t}}(\bar{X},\Q_{\ell}(1))=22,\;\; {\rm 
Pic}^{\tau}(\bar{X})=0.\]

 We first treat the case of a  supersingular $K3$ surface in the sense of
Shioda. Then by definition of supersingularity (in this sense)
$H^2(\bar X, \Q_\ell(1))$ is algebraic and we can apply
Corollary~\ref{alg}.

We now rely on the crutch of lifting to characteristic 0. From a recent paper
\cite{Lieblich-Maulik} (see in particular Theorem~6.1 and the bottom of
page 8), it follows that  if $X$ is not a Shioda-supersingular $K3$
surface,  we can find
\begin{enumerate}
\item[(i)] a complete discrete valuation ring $R$, with residue field
$\bar{\F}_p$, 
and quotient field of characteristic 0
\item[(ii)] an $R$-scheme $\pi:{\mathcal X}\to {\rm Spec}\,R$, such that $\pi$
is projective 
and smooth, of relative dimension $2$, with closed fiber  $\bar{X}$
\item[(iii)] if $Y:={\mathcal X}_{\bar{\eta}}$ is the geometric generic
fiber of $\pi$,
then the specialization homomorphism ${\rm Pic}\,(Y)\to {\rm
Pic}\,(\bar{X})$ is an isomorphism, which 
induces an isomorphism between the respective cones of effective cycles
\item[(iv)] there is an {\em injective} specialization homomorphism ${\rm
Aut}\,(Y)\to{\rm Aut}\,(\bar{X})$, 
whose image has finite index.
\end{enumerate}

The specialization map on automorphisms in (iv), which is important for us here,
is defined as follows. If $\psi$ is an 
automorphism of $Y$, then (after making a base change if needed), the authors of
\cite{Lieblich-Maulik} prove that it is induced by an 
automorphism of the generic fiber ${\mathcal X}_{\eta}$, which extends to an
$R$-automorphism $\psi_{\mathcal X}$ of ${\mathcal X}\setminus S$ for some 
finite set $S\subset \bar{X}\subset {\mathcal X}$ of closed points; the
induced automorphism of $\bar{X}\setminus 
S$ then extends to an automorphism of $\bar{X}$, which is defined to be the
specialization of $\psi$.

Granting this, we see that, after replacing $\varphi$ by a power, if necessary,
we may assume $\varphi$ is the 
specialization of an automorphism of $Y$, in the above sense. It then follows
that, under the specialization isomorphism 
\[H^2_{{\rm \acute{e}t}}(Y,{\mathbb Q}_{\ell}(1))\cong
H^2_{{\rm \acute{e}t}}(\bar{X},{\mathbb Q}_{\ell}(1))\]
the respective actions of $\psi$ and $\varphi$ are compatible. Further, the
polarization $[\Theta]$ of $\bar{X}$ 
determines uniquely a polarization of $Y$, which we may also denote by
$[\Theta]$, compatibly with the specialization 
isomorphism. The specialization isomorphism above is of course one component of
an isomorphism between cohomology rings, 
and so respects the corresponding cup products, thus inducing also an
isomorphism of $\ell$-adic vector spaces 
\[V(Y,[\Theta],\psi)\cong V(\bar{X},[\Theta],\varphi),\] 
again compatible with the respective automorphisms $\psi$, $\varphi$. It thus
suffices 
to prove that the eigenvalues of $\psi$ on $V(Y,[\Theta],\psi)$ are roots of
unity. 

We may identify the algebraic closure of the quotient field of the DVR $R$ with
the complex number field $\mathbb C$, and 
thus also consider $\psi$ as an automorphism of the complex projective $ K3$
surface $Y$. 

In fact, one has the following more general assertion; this observation is, in a
sense, the motivation for the Theorem 
proved in this paper, and was explained to us by K. Oguiso (in the shape
that $\varphi$ on $H^2_{{\rm tr}}(Y, \C)$ has finite order): 
\begin{prop}\label{hodge} Suppose $\psi$ is an automorphism of a projective
smooth surface $Y$ over $\mathbb C$, with a polarization 
$\Theta$ (not necessarily invariant under $\psi$). Then 
 $\psi$ has finite order on $V(Y,[\Theta],\psi)$.
\end{prop}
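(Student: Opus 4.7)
The plan is to invoke classical Hodge theory on $Y(\C)$ and reduce to Kronecker's theorem that an algebraic integer all of whose Galois conjugates have absolute value $1$ is a root of unity. Since $[\Theta]\in NS(Y)_\Q$, the orthogonal decomposition $H^2(Y,\Q)=NS(Y)_\Q\oplus T$ with $T:=NS(Y)^\perp_\Q$ is $\psi$-stable, and both summands are contained in $[\Theta]^\perp$. My first step would be to observe that this decomposition descends to $V$:
\[
V=V_{\rm alg}\oplus V_{\rm tr},\qquad V_{\rm alg}:=V\cap NS(Y)_\Q,\quad V_{\rm tr}:=V\cap T.
\]
For any $v\in V$, writing $v=v_a+v_t$ with $v_a\in NS(Y)_\Q$ and $v_t\in T$, the identity
\[
Q(\psi^n v_a,[\Theta])=Q(\psi^n v,[\Theta])-Q(\psi^n v_t,[\Theta])
\]
has both terms on the right vanishing (since $\psi^n v\in V\subset[\Theta]^\perp$ and $\psi^n v_t\in T\subset[\Theta]^\perp$), so the $\psi$-orbit of $v_a$ lies in $[\Theta]^\perp$; by maximality of $V$ one obtains $v_a\in V$, and hence $v_t=v-v_a\in V$ as well.

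On $V_{\rm alg}$ I would repeat the divisor argument from earlier in the paper: by the Hodge Index Theorem for divisors, $Q$ is negative definite on $NS(Y)_\R\cap[\Theta]^\perp$; the action of $\psi$ is an isometry with algebraic integer eigenvalues whose Galois conjugates are again eigenvalues on the same definite lattice and so also of absolute value $1$, and Kronecker plus semisimplicity yields finite order. For $V_{\rm tr}\subset T$ the corresponding input is the Hodge--Riemann bilinear relations. Because $[\Theta]\in H^{1,1}$, the decomposition
\[
T\otimes\C \;=\; H^{2,0}\oplus \bigl(T\otimes\C\cap H^{1,1}\bigr)\oplus H^{0,2}
\]
consists entirely of $[\Theta]$-primitive classes, so the Hodge--Riemann form $\tilde Q(\alpha,\beta)=-Q(C\alpha,\bar\beta)$, with $C$ the Weil operator of weight $2$, is positive definite on $T\otimes\C$. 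Being holomorphic, $\psi$ preserves the Hodge decomposition (hence $C$), complex conjugation, and the cup product $Q$, so $\psi^*$ is unitary for $\tilde Q$.

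Unitarity forces $\psi|_T$ to be diagonalizable over $\C$ with all eigenvalues of absolute value $1$; the same properties pass to the $\psi$-stable subspace $V_{\rm tr}\otimes\C$. Since $V_{\rm tr}$ is defined over $\Q$, the eigenvalues of $\psi|_{V_{\rm tr}}$ are algebraic integers, and every Galois conjugate is again an eigenvalue of the same rational operator, so again of absolute value $1$; Kronecker produces roots of unity, and semisimplicity then gives finite order. Combined with the finite order on $V_{\rm alg}$, this proves the proposition. The main obstacle is really only the decomposition $V=V_{\rm alg}\oplus V_{\rm tr}$: without it one cannot cleanly separate the two distinct sources of definiteness---Hodge Index for divisors and Hodge--Riemann for transcendentals---that each drive a Kronecker argument; once the decomposition is established, the result reduces to the observation of Oguiso recorded in the introduction.
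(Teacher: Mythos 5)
Your proof is correct, but it takes a genuinely different route from the paper's. The paper works with a single \emph{real} orthogonal decomposition by Hodge type, $V_{\R}=V_{\R}^{(1,1)}\perp V_{\R,{\rm tr}}$ with $V_{\R}^{(1,1)}=V_{\R}\cap H^{1,1}$: the Hodge index theorem makes $Q$ negative definite on the first piece and positive definite on its complement, so after a sign change one gets a single $\psi$-invariant Euclidean form on $V_{\R}$, hence semisimplicity and eigenvalues of absolute value one, and Kronecker finishes. You instead decompose the \emph{rational} space $V=V_{\rm alg}\oplus V_{\rm tr}$ by algebraicity (which needs the maximality argument you give, a step the paper avoids), then run two separate definiteness arguments: Hodge index on divisors for $V_{\rm alg}$, and the full Hodge--Riemann relations with the Weil operator for $V_{\rm tr}\subset T$, before applying Kronecker to each piece. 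Both are valid Hodge-theoretic proofs; yours is somewhat longer but makes the two sources of definiteness explicit, and your $V_{\rm tr}$ is exactly the transcendental piece the paper isolates in Corollary~\ref{alg} and Remark~\ref{rmk} for the finite-field argument, so your version ties the complex case more closely to the rest of the paper's architecture. One small slip to correct: you assert that \emph{both} summands of $H^2(Y,\Q)=NS(Y)_\Q\oplus T$ lie in $[\Theta]^{\perp}$; this is false for $NS(Y)_\Q$, since $Q([\Theta],[\Theta])=\Theta^2>0$. Only $T=NS(Y)_\Q^{\perp}\subset[\Theta]^{\perp}$. Your subsequent computation uses only $T\subset[\Theta]^{\perp}$, so the argument stands, but the preliminary remark should be fixed.
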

\begin{proof} 
By the comparison theorem between \'{e}tale and singular cohomology, we 
reduce to proving a similar assertion for the action of
$\psi$ on $H^2(Y,{\mathbb Q})$. In other words, it suffices to show that the
eigenvalues of $\psi$ acting on the
similarly defined ${\mathbb Q}$ vector space 
\[V(Y,[\Theta],\psi)\subset H^2(Y,{\mathbb Q})\]
are roots of unity. Since $\psi$ is also compatible with the cup product, it
defines an orthogonal transformation with 
respect to the non-degenerate bilinear form on $V$ defined by the cup product.

Consider now the associated non-degenerate real bilinear form on $V_{\mathbb
R}=V\otimes_{\mathbb Q}{\mathbb R}$. From the
Hodge decomposition, we may write $V_{\mathbb R}$ as an orthogonal direct sum
\[V_{\mathbb R}=V_{\mathbb R}^{(1,1)}\perp V_{{\mathbb R},{\rm tr}},\]
where 
\[V_{\mathbb R}^{(1,1)}=V_{\mathbb R}\cap H^{(1,1)} \subset H^2(Y,{\mathbb
C}),\]
and the other summand is its orthogonal complement. This does give an orthogonal
direct sum decomposition of $V_{\mathbb R}$,
since by the Hodge index theorem (the Hodge theoretic version), the cup product
pairing on $V_{\mathbb R}$ is negative definite
on $V_{\mathbb R}^{(1,1)}$, and positive definite on its orthogonal complement. 

Since the Hodge decomposition on $H^2(Y,{\mathbb C})$ is also preserved by
$\psi$, it follows that $\psi$ preserves the 
above orthogonal direct sum decomposition of $V_{\mathbb R}$. Hence, after
changing the sign of the inner product on 
$V_{\mathbb R}^{(1,1)}$, we see that $\psi$ preserves a non-degenerate Euclidean
form on $V_{\mathbb R}$. Hence the 
 $\psi$ is semi-simple and all its eigenvalues are complex numbers of
absolute value 1.

However, we also know that the eigenvalues of $\psi$ are algebraic integers,
which are invertible, and the characteristic 
polynomial of $\psi$ has integer coefficients (since it obviously has rational
coefficients). Thus, by Kronecker's theorem, 
these eigenvalues are roots of unity.
\end{proof}
\section{Some further remarks}
\subsection{Standard Conjectures and Theorem~\ref{thm1}} \label{ss:standard}
P. Deligne explained to us that our Theorem~\ref{thm1} would be  a consequence
of the standard conjectures, were they available. We reproduce his argument. 

As explained in Section $2$, we have to show that $\varphi$ has finite order on
transcendental cohomology  $H^2_{{\rm tr}}(\bar X, \Q_\ell(1))$, where $X$ is a
smooth projective surface over a finite field $\F_q$. We denote by $M$ the
underlying Chow motive with $\Q$ coefficients, which is endowed with a quadratic
form  $b: M\otimes M\to \Q$, which induces the cup-product $H^2_{{\rm tr}}(\bar
X, \Q_\ell(1))\otimes H^2_{{\rm tr}}(\bar X, \Q_\ell(1))\to H^4(\bar X,
\Q_\ell(2))$ in $\ell$-adic realization. The automorphism $\varphi$ induces an
orthogonal automorphism of $M$. Its characteristic polynomial lies
in $\Q[t]$. 
But the $\ell$-adic realization of the characterisitc polynomial lies in
$\Z[t]$ (\cite{Katz-Messing}),
thus in fact, it lies on $\Z[t]$.  On the other hand, there should
exist
(\cite[V~2.4.5.1(iv)]{Saavedra}) a fiber functor $\omega$ over $\R$ on the
category of Chow motives of weight $0$, with the extra property that $b(\omega)$
is a positive definite form. Thus this implies already that $\varphi$ is
semi-simple and that its eigenvalues on $M$ have absolute value $1$. On the
other hand, they are algebraic integers again by \cite{Katz-Messing}. 
 We conclude by Kronecker's theorem that the
eigenvalues are roots of unity.

\subsection{Topological entropy,  even and odd degree cohomology}
\label{ss:top}
Recall that the {\em topological entropy} of a homeomorphism $\varphi:M\to M$ of
a compact, orientable manifold $M$ is defined to be 
the natural logarithm of the spectral radius of the linear tranformation induced
by $\varphi$ on the rational cohomology 
algebra $H^\bullet (M,{\mathbb Q})$. Since $\varphi$ is a homeomorphism, it
induces a  ($\Z$-linear)
automorphism of the integral cohomology algebra, so 
that the characteristic polynomial of $\varphi$ acting on cohomology has integer
coefficients, and the eigenvalues of $\varphi$ on 
cohomology are algebraic integers which are invertible, that is, are units in
the ring of algebraic integers.

If $M$ is a complex smooth projective variety, and
$\varphi$ is an algebraic
automorphism, the value of the entropy is taken on the {\it even} degree
cohomology $H^{2\bullet}(M, \Q)$ (see \cite[Theorem~2.1]{DS}).

We can now go through our proof of Theorem~\ref{thm1} from which we deduce:
\begin{thm} \label{even} 
 In the situation the Theorem~\ref{thm1}, the maximum of the absolute values of
the eigenvalues
of $\varphi$ on $\oplus_{i=0}^4 H^i_{{\rm \acute{e}t}}(\bar X, \Q_\ell)$ with
respect to any
complex embedding is achieved on   the $\Q_\ell$-span
of $\langle \varphi^n[\Theta], \ n\in \Z \rangle$, in 
$H^2_{{\rm \acute{e}t} } (\bar X, \Q_\ell)$.

\end{thm}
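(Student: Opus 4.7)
The plan is to control each cohomology degree $H^i(\bar X,\Q_\ell)$ separately, reducing the $H^2$ part to Theorem~\ref{thm1} and the $H^1$ part (hence $H^3$ by Poincar\'e duality) to the surface classification cases already treated in this paper.

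First I would dispose of the trivial contributions. Since $\varphi$ is an automorphism of degree $1$, it acts as the identity on $H^0$ and $H^4$, contributing only the eigenvalue $1$. Poincar\'e duality together with $\varphi$-invariance of the cup product $H^1\otimes H^3\to H^4$ identifies the $\varphi$-eigenvalues on $H^3$ with the inverses of those on $H^1$, so it suffices to bound $\rho(\varphi|_{H^1})$ and $\rho(\varphi|_{H^2})$. Moreover, the span $W:=\langle\varphi^n[\Theta]\,:\,n\in\Z\rangle_{\Q_\ell}$ is the scalar extension of the $\varphi$-stable $\Z$-lattice in $NS(\bar X)$ generated by the same cycles, so the characteristic polynomial of $\varphi|_W$ lies in $\Z[t]$ with constant term $\pm 1$ by Proposition~\ref{eigen}, and in particular $\rho(\varphi|_W)\ge 1$.

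The core step is the equality $\rho(\varphi|_{H^2})=\rho(\varphi|_W)$. Unravelling the definition of $V$ gives $V=W^\perp$ under the cup-product pairing on $H^2(\bar X,\Q_\ell(1))$, and by Theorem~\ref{thm1} the action of $\varphi$ on $W^\perp$ has finite order. Put $U:=W\cap W^\perp$; since $\varphi$ is orthogonal for this non-degenerate symmetric pairing, the filtration $W^\perp\subset W+W^\perp=U^\perp\subset H^2$ has $\varphi$-module graded pieces $W^\perp$, $W/U$, and $U^{\vee}$. Both $W^\perp$ and $U^\vee$ carry only roots-of-unity eigenvalues (the latter because its eigenvalues are inverses of those of the finite-order action on $U\subset W^\perp$), so the non-root-of-unity eigenvalues of $\varphi|_{H^2}$ coincide with those of $\varphi|_{W/U}$, and hence with those of $\varphi|_W$; since $\rho(W)\ge 1$ this gives the equality and shows the maximum on $H^2$ is attained on $W$.

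The last step is to show $\rho(\varphi|_{H^1})\le \rho(\varphi|_{H^2})$, which I would do by the same classification split used to prove Theorem~\ref{thm1}. For surfaces of general type ($\varphi$ has finite order), Enriques and K3 surfaces (either $H^1=0$ or $\rho(\varphi|_{H^1})=1$), and elliptic surfaces (where the argument of Proposition~\ref{kod=1}, after reduction to a fiberwise translation, shows $\varphi$ is quasi-unipotent on $H^1$ too), nothing further is needed. For an abelian surface I would invoke the explicit eigenvalue pictures from Cases~3 and~4 of the abelian-surface analysis: in Case~3, after normalising ${\rm Nrd}(\varphi)=1$, the $H^1$-eigenvalues pair up as $\eta,1/\eta,\bar\eta,1/\bar\eta$, so that $|\eta|^2=\eta\bar\eta$ appears as an $H^2=\wedge^2 H^1$ eigenvalue and dominates $|\eta|$; in Case~4, after passing to the totally real subfield, they are $a,a,b,b\in\mathbb R$ with $ab=\pm 1$, and the $H^2$-eigenvalue $a^2$ dominates $|a|$ when $|a|>1$. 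The main obstacle is verifying that the dominant $H^2$-eigenvalue is actually realised inside $W$ and not merely somewhere in $H^2$: this follows because the $\varphi$-eigenvectors in the rank-$2$ algebraic summand of $H^2$ (for eigenvalues $\ne\pm 1$) are null for the cup-product pairing, while $[\Theta]^2>0$ forces $[\Theta]$ to meet both null eigenlines in this Hodge-index $(1,1)$ plane, so its $\varphi$-orbit already spans the full summand and $\rho(W)=\rho(H^2)$.
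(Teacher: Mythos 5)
Your reduction to bounding $\rho(\varphi|_{H^1})$ and $\rho(\varphi|_{H^2})$ matches the paper's, and your $U$-filtration argument on $H^2$ (with $U=W\cap W^\perp$) is a correct and more explicit version of a step the paper compresses into ``By Theorem~\ref{thm1}''; the last paragraph about $[\Theta]$ meeting both null eigenlines is then redundant given that filtration. The genuine divergence, and the place where your proposal has gaps, is the bound $\rho(\varphi|_{H^1})\le\rho(\varphi|_{H^2})$.

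The paper does \emph{not} run through the Kodaira classification here; it uses the Albanese map, splitting into the cases where ${\rm alb}(X)$ is a point (then $H^1=H^3=0$), a curve (then $\varphi$ has finite order on $C$, hence on $H^1$ and $H^3$ via ${\rm alb}^*$ and ${\rm alb}_*$), or a surface (then $X$ is of general type or an abelian surface). This covers uniformly the cases your classification list omits. Concretely, your list (general type, Enriques, K3, elliptic, abelian) leaves out surfaces of negative Kodaira dimension: for ruled surfaces over a curve of genus $\ge 1$, $H^1\ne 0$ and one cannot invoke ``$V=V_{\rm alg}$'' as in the proof of Theorem~\ref{thm1}, since that reduction says nothing about $H^1$. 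The Albanese argument disposes of them cleanly; your write-up does not.

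The more serious gap is in the abelian-surface case. You invoke only Cases~3 and~4 of the paper's abelian analysis, but Case~2 ($X$ isogenous to $E\times E$ with $E$ supersingular, minimal polynomial of Frobenius linear) is exactly the case where ${\rm Aut}(\bar X)$ is a unit group of $M_2(D)$ for a quaternion algebra $D$, and $\varphi$ can have infinite order. There the eigenvalue structure on $H^1$ is not a priori of the tidy form $\{\eta,1/\eta,\bar\eta,1/\bar\eta\}$ or $\{a,a,b,b\}$; one must exclude the Pisot configuration where a single eigenvalue has absolute value $>1$ and the rest have absolute value $<1$. That exclusion is precisely the content of Proposition~\ref{H^1-abelian} in the paper, proved via the observation that $\Q(\varphi)$ would then be a maximal commutative subfield of $M_n(D)$ with a real place, contradicting that $D$ is non-split at $\infty$. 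Your proposal has no substitute for this. Note also that the paper handles $H^3$ by applying Proposition~\ref{H^1-abelian} to the dual abelian surface rather than simply taking inverses on $H^1$, since controlling $\rho(H^3)\le\rho(H^2)$ amounts to a bound on the \emph{smallest} $H^1$-eigenvalue, which is a genuinely different condition from the one controlling $\rho(H^1)$.
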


\begin{proof}
 The automorphism $\varphi$ acts as the identity on 
  $H^i_{{\rm
\acute{e}t} }(\bar X, \Q_\ell)$, for $i=0$ and $i=4$. 
Since $\varphi$ respects the cup-product $H^1_{{\rm
\acute{e}t} }(\bar X, \Q_\ell) \times H^3_{{\rm
\acute{e}t} }(\bar X, \Q_\ell) \to H^4_{{\rm
\acute{e}t} }(\bar X, \Q_\ell)$, its eigenvalues on $H^3_{{\rm
\acute{e}t} }(\bar X, \Q_\ell) $ are the inverse of its eigenvalues on 
$H^1_{{\rm
\acute{e}t} }(\bar X, \Q_\ell)$.
 On the other hand, the
characteristic polynomial of $\varphi$ on any $H^i_{{\rm
\acute{e}t} }(\bar X, \Q_\ell)$ has $\Z$-coefficients, and the eigenvalues lie
in $\bar \Z$. Thus the constant term of this polynomial is $\pm 1$ and, fixing a
complex embedding of a number field containing all the roots, at least one
eigenvalue has absolute value $\ge 1$. Thus the maximum of the absolute values
is always achieved on $\oplus_{i=1}^3 H^i_{{\rm \acute{e}t}}(\bar X, \Q_\ell)$.

 By Theorem~\ref{thm1}, we just have to
see that the absolute values of the eigenvalues on $H^1_{{\rm \acute{e}t}}(\bar
X, \Q_\ell)$ and   $H^3_{{\rm \acute{e}t}}(\bar
X, \Q_\ell)$
are at most those on $H^2_{{\rm \acute{e}t}}(\bar X,
\Q_\ell)$.

Again we may assume after a finite extension of $\F_q$ that $X$ has a rational
fixed point under $\varphi$, which we take to define the Albanese mapping 
${\rm alb}: X\to {\rm Alb}(X)$. Then the action of $\varphi$ extends so as
to make ${\rm alb}$ a $\varphi$-equivariant map.

If the image of  $\rm alb$  is $0$, this means  $H^1_{{\rm
\acute{e}t}}(\bar X,
\Q_\ell)= H^3_{{\rm \acute{e}t}}(\bar X,
\Q_\ell)=0$, there is nothing to prove.

If the image of ${\rm alb}$ is a curve $C$, then $\varphi$ acts on $C$, thus on
its normalization $\tilde{C}$. Since the genus of $\tilde{C}$ is $\ge 1$, the
action of $\varphi$ on $\tilde{C}$, thus on $C$ has finite order. Thus via the
surjective  pull-back map ${\rm alb}^*: H^1_{{\rm \acute{e}t}}(\bar C,
\Q_\ell) \to H^1_{{\rm \acute{e}t}}(\bar X,
\Q_\ell)$,  and its injective push-down dual map 
${\rm alb}_*: H^3_{{\rm \acute{e}t}}(\bar X,
\Q_\ell) \to H^1_{{\rm \acute{e}t}}(\bar C,
\Q_\ell)$
the action of $\varphi$ on $
H^i_{{\rm \acute{e}t}}(\bar X,
\Q_\ell), \ i=1,3$ is finite as well.

 If this is $2$-dimensional, then either $X$ is
of general type, in which case $\varphi$ has finite order and there is nothing
to prove, or else $X$ is an abelian surface.  In this case, we have a more
general 
 Proposition~\ref{H^1-abelian} below on $H^1$. But for an abelian surface, 
$H^i_{{\rm \acute{e}t}}(\bar X,
\Q_\ell)=H^j_{{\rm \acute{e}t}}(\bar X^\vee,
\Q_\ell) $ for $(i,j)=(3,1) $ and $(2,2)$, where $X^\vee$ is the dual abelian
surface. Since the eigenvalues of the induced autmorphism $\varphi^\vee$ on 
$H^2_{{\rm \acute{e}t}}(\bar X^\vee,
\Q_\ell) $ are those of $\varphi$ on  $H^2_{{\rm \acute{e}t}}(\bar X,
\Q_\ell)$, Proposition~\ref{H^1-abelian} concludes the proof of
Theorem~\ref{even}.

\end{proof}
 We now show that for automorphisms of abelian varieties, the spectral
radius of the 
induced linear automorphism on $H^1$ is at most that for the similar linear automorphism
of $H^2$. 
\begin{prop}\label{H^1-abelian}
Let $X$ be an abelian variety over a field $k$, and $\varphi$ an automorphism of $X$. Let 
$\bar X=X\otimes_k \bar k$ be the corresponding variety over an
algebraic
 closure $\bar k$, and let $\ell$ be a prime invertible in $k$. 

Then the complex absolute values of the eigenvalues of $\varphi$ on $H^1_{{\rm
\acute{e}t}}(\bar X,\Q_{\ell})$ 
are bounded above by the maximum  of the complex absolute values of the eigenvalues of $\varphi$ on 
$H^2_{{\rm \acute{e}t}}(\bar X,\Q_{\ell})$.
\end{prop}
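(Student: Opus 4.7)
The plan is to use the identification $H^\bullet_{{\rm \acute{e}t}}(\bar X,\Q_\ell) = \bigwedge^\bullet H^1_{{\rm \acute{e}t}}(\bar X,\Q_\ell)$, valid for any abelian variety, which reduces the proposition to a linear-algebra statement. Let $\alpha_1,\dots,\alpha_{2g}$ be the eigenvalues of $\varphi^*$ on $H^1_{{\rm \acute{e}t}}(\bar X,\Q_\ell)\otimes\bar\Q_\ell$ counted with multiplicity, fix an embedding $\bar\Q_\ell\hookrightarrow\C$, and set $r_1 = \max_i|\alpha_i|$. The eigenvalues on $H^2$ are then the products $\alpha_i\alpha_j$ for distinct indices $i<j$, and the goal is to show $r_1 \le \max_{i<j}|\alpha_i\alpha_j|$.

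First I would reduce to the case where $\varphi$ fixes the origin, hence is a group automorphism, which is harmless since translations act trivially on cohomology. Since $\deg\varphi = 1$, we have $\prod_i\alpha_i = \pm 1$ and hence $r_1 \ge 1$; if $r_1 = 1$, all $|\alpha_i|=1$ and the inequality is automatic. Assuming $r_1>1$, pick $\alpha_1$ with $|\alpha_1|=r_1$. If $\alpha_1\notin\R$, its complex conjugate $\bar\alpha_1$ is a distinct eigenvalue of the same absolute value (as the characteristic polynomial lies in $\Z[t]$), and $\alpha_1\bar\alpha_1 = r_1^2 \ge r_1$ appears as an eigenvalue on $H^2$; this disposes of the complex case.

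The crux is the real case $\alpha_1\in\R$. Here I would show that, after replacing $\varphi$ by a suitable positive power $\varphi^N$ (harmless since the spectral-radius inequality is preserved under passing to a power), the eigenvalue $\alpha_1^N$ has multiplicity at least $2$ in the characteristic polynomial of $\varphi^N$; then two independent eigenvectors produce the eigenvalue $r_1^{2N}\ge r_1^N$ on $\bigwedge^2 H^1$ for $\varphi^N$, which gives the inequality for $\varphi$. The even-multiplicity claim for real eigenvalues would be established via Albert's classification applied to the simple isogeny factors $A$ of $X$ with their endomorphism algebras $D = {\rm End}^0(A)$ and Rosati involutions. For Albert types I (totally real), II (indefinite quaternion) and III (definite quaternion), the Albert numerical constraints combined with the structure of $V_\ell A$ as a free $D\otimes\Q_\ell$-module of appropriate rank force the intrinsic multiplicity of every eigenvalue on $V_\ell A$ to be at least $2$. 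For type IV (CM centre $K$ with totally real Rosati-fixed subfield $L$), Dirichlet's unit theorem yields a power of $\varphi$ lying in $L^\times$; then the two complex embeddings $\sigma,\bar\sigma$ of $K$ extending a given real embedding $\tau$ of $L$ give the same value $\tau(\varphi)$, doubling the multiplicity of each eigenvalue.

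The main obstacle is thus this case analysis for the real case. An alternative route would be to invoke Shimura's realization theorem: the characteristic polynomial $P_\varphi(t)\in\Z[t]$ depends only on the abstract datum $({\rm End}^0(X),\varphi,V_\ell X)$, which can be realized by a complex abelian variety, where the even-multiplicity of real eigenvalues of $\varphi^*$ on $H^1$ follows immediately from the Hodge decomposition $H^1(A_\C,\C) = H^{1,0}\oplus H^{0,1}$, on whose summands $\varphi^*$ acts with complex-conjugate eigenvalues.
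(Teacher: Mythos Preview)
Your strategy---reduce to showing that a real eigenvalue of maximal absolute value on $H^1$ must have multiplicity at least $2$---is sound and genuinely different from the paper's. The paper first specializes to a finite field, then assumes the bad configuration (a single real $\lambda>1$, all other eigenvalues of absolute value $<1$), uses an induction to force $\Q[\varphi]$ to be a maximal commutative subfield of ${\rm End}^0(X)$, places the geometric Frobenius inside $\Q[\varphi]$ to conclude $X$ is supersingular, and finally derives a contradiction from the fact that a maximal subfield of $M_n(D)$ must split $D$ at every place, whereas $D$ is ramified at $\infty$. Your route, by contrast, aims to prove the cleaner statement that real eigenvalues of $\varphi^*$ on $H^1$ always have even multiplicity, via the Albert classification; this avoids the finite-field reduction and the Frobenius argument entirely. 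Both arguments, in the end, hinge on the behaviour of ${\rm End}^0(X)$ at the archimedean place.

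That said, your execution has gaps precisely at the crucial cases. For type~III in characteristic $p$ the numerical constraint is only $e\mid g$, not $2e\mid g$; in particular for $X=E^n$ with $E$ a supersingular elliptic curve one has $g/e=1$, so the rank of $V_\ell A$ as a $D\otimes\Q_\ell$-module gives multiplicity only $\geq 1$. What actually forces even multiplicity here is that $D\otimes_K\R\cong\H$ at every real place of $K$: since the reduced characteristic polynomial of $\varphi\in M_n(D)$ has coefficients in $K$, its image under each real embedding is the characteristic polynomial of an element of $GL_n(\H)\subset GL_{2n}(\C)$, and the quaternionic structure (an antilinear $J$ with $J^2=-1$ commuting with the action) pairs eigenvectors for $\lambda$ with eigenvectors for $\bar\lambda$. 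Your alternative Shimura-realization route fails for exactly the same reason: a complex abelian variety with ${\rm End}^0$ containing a definite quaternion algebra over $K$ requires $2e\mid g$, so $(M_n(D),V_\ell(E^n))$ with $n$ odd cannot be realized over $\C$. For type~IV your Dirichlet argument applies only when $\varphi$ lies in the centre $K$, i.e.\ essentially when $n=1$ and $D=K$; for general $\varphi\in GL_n(D)$ one should instead observe that since $K$ is CM, the embeddings $\sigma,\bar\sigma:K\hookrightarrow\C$ are always distinct, and a real root of $\sigma({\rm rcp}_K(\varphi))$ is automatically a root of $\bar\sigma({\rm rcp}_K(\varphi))=\overline{\sigma({\rm rcp}_K(\varphi))}$, giving even multiplicity in $N_{K/\Q}({\rm rcp}_K(\varphi))$ directly, with no need for Dirichlet or for passing to a power.
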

\begin{proof}
By the standard arguments involving the choice of a model over a finitely generated $\Z$-algebra, and specialization,
we reduce to the case when $k=\F_q$ is a finite field. We also fix an embedding $\Q_{\ell}\hookrightarrow \C$, so that
we may speak of the eigenvalues as complex numbers. Without loss of generality, we may also increase the size of the 
finite field $\F_q$, replace $\varphi$ by a power, and replace $X$ by an isogenous abelian variety. Thus, we may write 
$X=X_1\times\cdots\times X_r$ where the $X_i$ are powers of mutually
non-isogenous absolutely simple abelian varieties, 
in which case $\varphi$ must be a product $\varphi_1\times\cdots\times\varphi_r$ with $\varphi_j\in {\rm Aut}(X_j)$.
From the K\"unneth formula, it follows that it suffices to consider the case
when $X=X_1$ is a power of an absolutely simple
abelian variety.  In this case, ${\rm End}(X)\otimes\Q$ is a central simple algebra over a number field. 

We also make use of the fact that $H^2_{{\rm \acute{e}t}}(\bar
X,\Q_{\ell})=\bigwedge^2H^1_{{\rm \acute{e}t}}(\bar X,\Q_{\ell})$ for an abelian
variety. The automorphism $\varphi$ has eigenvalues 
on $H^1_{{\rm \acute{e}t}}(\bar X,\Q_{\ell})$  which are
invertible algebraic integers whose product is $1$, and so the maximal absolute
value 
of these eigenvalues is always $\geq 1$.

Thus, if we consider the complex absolute values of the eigenvalues of $\varphi$
on $H^1_{{\rm \acute{e}t}}(\bar X,\Q_{\ell})$, counted with multiplicity, the 
Proposition is clearly true, unless the largest such absolute value is $>1$, and appears exactly once, 
while all the other absolute values are $<1$. Since the set of eigenvalues is closed under complex 
conjugation (as the characteristic polynomial of $\varphi$ has integer coefficients), this largest
absolute value must correspond to a real eigenvalue, which we may take to be positive (replace $\varphi$ 
by its square if needed).

In other words, we have to rule out the possibility that $\varphi$ acting on
$H^1_{{\rm \acute{e}t}}(\bar X,\Q_{\ell})$ has one real eigenvalue $\lambda>1$, 
occuring with multiplicity 1, and all other eigenvalues of complex absolute value $<1$ (in particular, $\lambda$ must 
be a ``Pisot-Vijayaraghavan number''.) We do this by induction on the dimension
of $X$. 
Let $P(t)\in \Z[t]$ be the monic minimal polynomial of $\varphi$ as an element
of ${\rm End}(\bar X)$, and let 
$f(t)\in \Z[t]$ be the monic minimal polynomial over $\Q$ for the real algebraic integer $\lambda$. Then there is a
factorization of polynomials $P(t)=f(t)g(t)$, since $\lambda$ is an eigenvalue for $\varphi$, so that $P(\lambda)=0$. 
Now $\lambda$ must be a simple root of $P(t)$, so that $f(t)$, $g(t)$ are relatively prime polynomials in $\Q[t]$. If 
$g(t)$ is non-constant, then the identity component $Y$ of the
subgroup-scheme $\ker f(\varphi) \subset X$ is a  subabelian variety of
dimension $\ge 1$ and $< {\rm dim} X$  which is $\varphi$-stable and such that 
$\lambda >1$ is an eigenvalue of $\varphi$ on 
$H^1_{{\rm \acute{e}t}}(\bar Y,\Q_{\ell}) $. Thus $Y$ has dimension $\ge 2$ and
we can replace $X$ by $Y$ to show Proposition~\ref{H^1-abelian}, that is we can 
assume that $g(t)$ is constant, so that $P(t)=f(t)$ (as they are both monic).

In particular, the subring $L\subset {\rm End}(X)\otimes\Q$ generated by $\varphi$ over $\Q$, is a subfield, isomorphic to
$\Q(\lambda)$. We must also have 
\[[L:\Q]={\rm deg}P(t)=\dim_{\Q_{\ell}}H^1_{{\rm \acute{e}t} }(\bar X,\Q_{\ell})
=2\dim X.\] 
Thus $\varphi$ has distinct eigenvalues on $H^1_{{\rm \acute{e}t} }(\bar
X,\Q_{\ell})$, and is diagonalizable, and $\Q_{\ell}(\varphi)$ is a
maximal 
commutative subring of ${\rm End}_{\Q_{\ell}}(H^1_{{\rm \acute{e}t} }(\bar
X,\Q_{\ell}))$. In particular $L\subset {\rm End}(\bar X)$ is also a
maximal 
commutative subring. Thus 
     the
geometric Frobenius $F \in {\rm End}(\bar X)$, which commutes with $\varphi$, 
lies in $L$, and $F=Q(\varphi) $ for some polynomial $Q(t)\in \Q[t]$. We
conclude that $F$ has the eigenvalue $Q(\lambda) \in \R$ on 
$H^1_{{\rm \acute{e}t} }(\bar
X,\Q_{\ell})$. 
 This means, 
assuming, as we may, that $q$ is an even power of $p$, that $F$ has an integer
eigenvalue. Since $X$ is a power of an absolutely
simple abelian variety, Tate's theorems imply that the minimal polynomial of $F$
in ${\rm End}(\bar X)\otimes\Q$ is irreducible,
and  so, having an integer root, must be a linear polynomial. This forces $X$
to be isomorphic to a power of a supersingular  elliptic curve, say 
$X\cong E^n$. 

Now ${\rm End}(\bar X)\otimes\Q\cong M_n(D)$, where $D={\rm
End}(\bar E)\otimes\Q$ is the unique quaternion division algebra 
over $\Q$ which splits at all places apart from $p$ and $\infty$. Since
$L\subset M_n(D)$ is a maximal commutative subfield of the central 
simple algebra $M_n(D)$, we know that $L$ is a splitting field for the algebra, i.e., $M_n(D)\otimes_{\Q}L\cong  {\rm End}_L(M_n(D))\cong 
M_{2n}(L)$ as central simple algebras over $L$ (where $D$ is regarded as an $L$-vector space through right multiplication; the isomorphism
is given by $M_n(D)\otimes L\ni a\otimes b\mapsto (x\mapsto axb)\in {\rm End}_L(M_n(D))$). 
(We thank M. S. Raghunathan for a discussion on this point.).
Since $L$ has a real embedding, we conclude that $M_n(D)\otimes_{\Q}{\mathbb R}\cong M_{2n}({\mathbb R})$, which contradicts that $D$ is 
non-split at $\infty$.  This concludes the proof.
\end{proof}

\subsection{Algebraic entropy} \label{ss:alg}

In general, if $X$ is a smooth proper variety over a field $k$, and $\varphi$ an
algebraic automorphism of $X$, then we may associate 
to it two numerical invariants, as follows.
\begin{enumerate}
 \item Let $\ell$ be a prime invertible in $k$, and let
$\bar{X}=X\times_k\bar{k}$ be the corresponding (smooth, proper)
variety over 
an algebraic closure $\bar{k}$. The characteristic polynomial of $\varphi$
on $H^\bullet_{{\rm \acute{e}t}}(\bar{X},{\mathbb Q}_{\ell})$ is 
independent of $\ell$, and has integer coefficients, and algebraic integer roots
(which are units); hence we may define the spectral radius 
of $\varphi$ on $H^\bullet_{{\rm \acute{e}t}}(\bar{X},{\mathbb Q}_{\ell})$ as a
real number $\geq
1$, and define its natural logarithm to be the {\em topological entropy}
of $\varphi$. When $k\subset {\mathbb C}$, so that we may associate to
$(X,\varphi)$ a compact complex manifold $X_{\mathbb C}$, and a holomorphic 
automorphism $\varphi_{\mathbb C}$, then our definition agrees with the usual
one (given above) for $\varphi_{\mathbb C}$.
\item  We may instead define an invariant using algebraic cycles, as follows.
Let $\bar{X}$ be as above, and $CH^\bullet_{\rm num}(\bar{X})$ the ring of 
algebraic cycles on $\bar{X}$ modulo numerical equivalence. Then 
$\varphi$ yields an automorphism of the ring $CH^\bullet_{\rm num}(\bar{X})$,
whose underlying abelian group is known to be a finitely generated free abelian 
group; thus the characteristic polynomial of $\varphi$ on this ring has integer
coefficients, and eigenvalues which are algebraic integer units. 
We may now define the {\em algebraic entropy} of $\varphi$ to be the natural
logarithm  of the spectral radius of $\varphi$ acting on $CH^\bullet_{\rm
num}(\bar{X})$.
\end{enumerate}

Our main result, Theorem~\ref{thm1}, and its corollary ~\ref{cor},
with Theorem~\ref{even}, imply that for automorphisms of smooth projective 
algebraic surfaces, {\em the algebraic
and topological entropies coincide}. One may ask whether this is true
in arbitrary dimension.  
 It would in particular imply that the value of the entropy on the whole
$\ell$-adic cohomology is taken on even degree cohomology, which is true in
characterisitc $0$ (see Section~\ref{ss:top}).

\bibliographystyle{plain}
\renewcommand\refname{References}

\end{document}